\theoremstyle{plain} % definition 
\newtheorem{lemma}[equation]{Lemma} 
\newtheorem{theorem}[equation]{Theorem} 
\newtheorem{corollary}[equation]{Corollary}
\theoremstyle{definition}
\theoremstyle{remark}
\newcommand{\unit}{1\!\!1}
\newcommand{\norm}[1]{\ensuremath{\left\|#1\right\|}}
\newcommand{\abs}[1]{\ensuremath{\left\vert#1\right\vert}}
\newcommand{\ip}[2]{\ensuremath{\left\langle#1,#2\right\rangle}}
\newcommand{\D}{\mathbb{D}}
\newcommand{\B}{\mathbb{B}}
\newcommand{\C}{\mathbb{C}}
\newcommand{\avg}[1]{\langle #1 \rangle}
\newcommand{\vol}[1]{\textnormal{vol}_{#1}}
\newcommand{\Bn}{\mathbb{B}_n}
\renewcommand{\v}[1]{v_{#1}}
\newcommand{\id}{\textnormal{id}}
\newcommand{\sn}{\mathbb{S}_{N\theta}}
\renewcommand{\tanh}{\textnormal{tanh}}
\begin{document}

\title[Berezin and Bergman Estimates] 
  {Weighted Berezin and Bergman Estimates on the Unit Ball in $\C^{n}$}
 \subjclass[2010]{Primary: 32A36,42B35  }
\keywords{Bergman projection, Berezin transform, weighted inequalities}

\author[R. Rahm]{Rob Rahm}
\address{Robert Rahm, Washington University in St. Louis Department of Mathematics
  \\One Brookings Drive 
  \\St. Louis, MO 63130} 
  \email{robertrahm@gmail.com}

\author[E. Tchoundja]{Edgar Tchoundja}
\address{Edgar Tchoundja, Washington University in St. Louis Department of Mathematics
  \\One Brookings Drive 
  \\St. Louis, MO 63130} 
  \email{etchoundja@math.wustl.edu}

\author[B. D. Wick]{Brett D. Wick}
\address{Brett D. Wick, Washington University in St. Louis Department of Mathematics
  \\One Brookings Drive 
  \\St. Louis, MO 63130} 
  \email{wick@math.wustl.edu}
\thanks{B. D. Wick's research supported in part by National Science Foundation DMS grants \#0955432 and \#1560955.}

\maketitle

\begin{abstract}
Using modern techniques of dyadic harmonic analysis, we are able to
prove sharp estimates for the Bergman projection and Berezin 
transform and more general operators in weighted Bergman spaces
on the unit ball. 
The estimates are in terms of the Bekolle-Bonami constant of 
the weight. 
\end{abstract}

\section{Introduction and main results} %\label{s:}
Recall that the Bergman space $A_t^{p}(\Bn)=:A_t^p$ is defined to be
the space of holomorphic functions on $\Bn$ with finite 
$L_t^p(\Bn):=L_t^p$ norm. That is $f\in A_t^p$ if it is holomorphic 
and the following norm is finite:
\begin{align*}
\norm{f}_{A_t^p}^{p}
:=c_t\int_{\Bn}\abs{f(z)}^{p}(1-\abs{z}^2)^{t}dV(z).
\end{align*}
Above, $dV(z)$ is the standard Lebesgue measure on $\Bn$ and 
for $t> -1$, the constant $c_t$ is chosen so that 
$\int_{\Bn}c_t(1-\abs{z}^2)^{t}dV(z)=1$. When $t\leq -1$, 
we set $c_t=1$. We will let $d\v{t}=c_t(1-\abs{z}^2)^{t}dV(z)$. 

The purpose of this paper is to prove one--weight inequalities for the 
operators given by:
\begin{align*}
S_{a,b}f(z)
:=\left(1-\abs{z}^2\right)^{a}
	\int_{\Bn}\frac{f(w)}{\left(1-z\overline{w}\right)^{n+1+a+b}}d\v{b}(w)
\end{align*}
and 
\begin{align*}
S_{a,b}^{+}f(z)
:=\left(1-\abs{z}^2\right)^{a}
	\int_{\Bn}\frac{f(w)}{\abs{1-z\overline{w}}^{n+1+a+b}}d\v{b}(w),
\end{align*}
where $-a<b+1$ and $z\overline{w}=\sum_{i=1}^{n}z^i\overline{w}^i$.
That is, we want to know for which weights, i.e. positive locally integrable 
functions $u$, we have the following norm inequality:
\begin{align*}
\norm{S_{a,b}:L_{b}^{p}(u)\to L_{b}^{p}(u)}<\infty 
\end{align*}
where $L^{p}_{b}(u)$ denotes the set of functions that are $p$th power integrable with respect to $u(z)dv_b(z)$.

The operators $S_{a,b}$ and $S_{a,b}^{+}$ are important in the study of 
function--theoretic operator theory on the Bergman spaces (see for example, 
\cite{Zhu2005}) and so are interesting in their own right. However, 
our main motivation comes from the operators $S_{0,b}$ and $S_{n+1+b,b}^{+}$
which are the Bergman projection and Berezin transform respectively. 

% It is well--known that $A_{b}^{2}$ is a reproducing kernel space with reproducing 
% kernel given by $(1-\overline{z}w)^{-(n+1+b)}$. It is also well--known that 
% $A_{b}^{2}$ is a closed subspace of $L_{b}^{2}$ and simple functional 
% analysis shows that the orthogonal projection is an integral operator with 
% kernel $(1-\overline{z}w)^{-(n+1+b)}$; this operator is commonly called 
% the Bergman projection. The operator $S_{0,b}$ is therefore the 
% Bergman projection for $A_{b}^{2}$. 

% The operator $S_{n+1+b,b}^{+}$ is called the Berezin transform. In many 
% respects, this operator plays the role that the Poisson extension 
% plays in Hardy space theory (e.g. it fixes harmonic functions). 

Before we state our main result, we need to give some definitions. 
Recall that for $z\neq 0$, the Carleson tent over $z\in\Bn$ is defined 
to be the set:
\begin{align*}
T_{z}:=
\left\{w\in\Bn: \abs{1-\overline{w}\frac{z}{\abs{z}}}<1-\abs{z}\right\}
\end{align*}
and the Carleson tent over $0$ is $\Bn$. For $b>-1$, we define the $D_{p,a,b}$ 
characteristic of two weights $u,\sigma$ by:
\begin{align}
[u,\sigma]_{D_{p,a,b}}
&:=\sup_{z\in\Bn}
 \left(\frac{\int_{T_z}\sigma d\v{b}}{\int_{Tz}d\v{b}}\right)^{p-1}
 \frac{\int_{T_z}u{d\v{pa+b}}}{\int_{T_z}d\v{pa+b}}
\\&\simeq\sup_{z\in\Bn}
 \left(\frac{\int_{T_z}\sigma d\v{b}}{\int_{Tz}d\v{b}}\right)^{p-1}
 \frac{\int_{T_z}\widetilde{u}{d\v{b}}}{\int_{T_z}d\v{b}}
 \vol{b}(T_z)^{\frac{-pa}{n+1+b}},
\end{align}
where $\widetilde{u}(z):=u(z)(1-\abs{z}^2)^{pa}$. Using the notation 
we will use in this paper (defined below), we can write this 
more compactly as:
\begin{align}\label{E:dpdef}
[u,\sigma]_{D_{p,a,b}}
=\sup_{z\in\Bn}
 {
 \left(\avg{\sigma}_{T_z}^{d\v{b}}\right)^{p-1}
 \avg{u}_{T_z}^{d\v{pa+b}}}
\simeq\sup_{z\in\Bn}
 {
 \left(\avg{\sigma}_{T_z}^{d\v{b}}\right)^{p-1}
 \avg{\widetilde{u}}_{T_z}^{d\v{b}}}
 {\vol{b}(T_z)^{\frac{-pa}{n+1+b}}}.
\end{align}

Our main theorem is:
\begin{theorem}\label{T:1wtsat}
Let $1<p<\infty$ and let $u$ be a weight and let 
$\sigma=u^{\frac{-p'}{p}}$ be the dual weight. If $b>-1$ there holds:
\begin{align*}
[u,\sigma]_{D_{p,a,b}}^{\frac{1}{2p}}
&\lesssim \norm{S_{a,b}:L_{b}^p(u)\to L_{b}^p(u)}
\\&\leq \norm{S_{a,b}^{+}:L_{b}^p(u)\to L_{b}^p(u)}
\lesssim [u,\sigma]_{D_{p,a,b}}^{\max\left\{1,\frac{1}{p-1}\right\}}.
\end{align*}
If $b\leq-1$, let 
$\psi(z)=u(z)^{\frac{-p'}{p}}(1-\abs{z}^2)^{\frac{-1}{p}(p'b+pa)}$ and
$\nu(z)=\psi(z)^{\frac{-p}{p'}}$. There holds:
\begin{align}
[\psi,\nu]_{D_{p',b,a}}^{\frac{1}{2p'}}
&\lesssim \norm{S_{a,b}:L_{b}^{p}(u)\to L_{b}^{p}(u)}
\\&\leq \norm{S_{a,b}^{+}:L_{b}^{p}(u)\to L_{b}^{p}(u)}
\lesssim [\psi,\nu]_{D_{p',b,a}}^{\max\left\{1,\frac{1}{p'-1}\right\}}.
\end{align}
\end{theorem}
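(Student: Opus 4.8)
The plan is to reduce the continuous operators $S_{a,b}$ and $S_{a,b}^{+}$ to a (finite supremum of) dyadic-type positive operators adapted to the Bergman metric on $\Bn$, prove the sharp weighted bound for those dyadic operators, and then transfer the bound back. The intermediate inequality $\norm{S_{a,b}}\le\norm{S_{a,b}^{+}}$ is immediate from the pointwise estimate $|S_{a,b}f|\le S_{a,b}^{+}|f|$, so the content is the upper bound for $S_{a,b}^{+}$ and the lower bound for $S_{a,b}$ in terms of $[u,\sigma]_{D_{p,a,b}}$.

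\emph{Step 1 (dyadic model).} I would first construct a family of dyadic Bergman-tree structures $\mcd^{(j)}$, $j=1,\dots,N$, on $\Bn$ (à la Hytönen–Martikainen / the "$1/3$-trick" on spaces of homogeneous type, using the $\va$-quasimetric $|1-z\bar w|$), whose associated Carleson tents comparably cover every Euclidean tent $T_z$. For each tree one forms the sparse/positive dyadic operator
\begin{align*}
A_{\mcd}f(z):=\sum_{Q\in\mcd}\left(\avg{f}_{Q}^{d\v{b}}\right)\v{pa+b}(Q)^{?}\,\unit_{Q}(z),
\end{align*}
with the exponents chosen so that the fractional-integral-type homogeneity $(1-|z|^2)^{a}$ is absorbed correctly; the key geometric input is that the kernel $|1-z\bar w|^{-(n+1+a+b)}$ is, up to constants, $\sum_{j,Q}\v{b}(Q)^{-1}\va(Q)^{-a/(n+1+b)}\unit_{Q}(z)\unit_{Q}(w)$ summed over the finitely many trees. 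This is the standard domination $S_{a,b}^{+}f\lesssim\sum_{j}A_{\mcd^{(j)}}f$.

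\emph{Step 2 (sharp bound for the dyadic operator).} For the positive dyadic operator I would run the now-standard two-weight testing / Sawyer-type argument: by a sparse (Carleson) reduction one reduces to sparse families, and then the weighted norm on $L^p_b(u)$ is controlled by $[u,\sigma]_{D_{p,a,b}}^{\max\{1,1/(p-1)\}}$ via the Bekollé–Bonami-type ``$A_p$ implies boundedness'' proof — split into the Carleson-embedding estimate (using $\avg{u}$ on tents) and its dual (using $\avg{\sigma}$), with the fractional power $\va(T_z)^{-pa/(n+1+b)}$ handled exactly as a fractional integral $I_\alpha$ is handled in the Euclidean theory. Summing over the finitely many trees preserves the constant. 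The lower bound is obtained by testing $S_{a,b}$ (not $S_{a,b}^{+}$) on the functions $f=\sigma\unit_{T_z}$ and exploiting that on $T_z$ one has $\mathrm{Re}(1-z\bar w)^{-(n+1+a+b)}\gtrsim (1-|z|^2)^{-(n+1+a+b)}$, which recovers a single term $\left(\avg{\sigma}_{T_z}^{d\v{b}}\right)^{p-1}\avg{u}_{T_z}^{d\v{pa+b}}$ and hence, after taking the supremum and the $p$-th root, the power $[u,\sigma]_{D_{p,a,b}}^{1/(2p)}$; the extra factor $2$ in the exponent is the usual loss from ``one test function gives the square root in the $A_2$-type normalization''.

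\emph{Step 3 (the case $b\le-1$).} Here $d\v{b}$ is not finite, so I would not work on $\Bn$ directly but instead dualize. Writing the $L^p_b(u)$ bound for $S_{a,b}$ as a bilinear form, a change of variables/duality identity expresses $\norm{S_{a,b}:L^p_b(u)\to L^p_b(u)}$ as (comparable to) the norm of $S_{b,a}$ (note the swapped indices) on $L^{p'}_{a}(\psi)$, where $\psi$ and $\nu$ are exactly the weights defined in the statement — the factor $(1-|z|^2)^{-\frac1p(p'b+pa)}$ is precisely what converts $d\v{b}$-integration against $u$ into $d\v{a}$-integration against $\psi$ and makes the new base weight parameter $a>-1$ legitimate. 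Then one applies the $b>-1$ case already proven (with $(p,a,b)$ replaced by $(p',b,a)$) to get $[\psi,\nu]_{D_{p',b,a}}^{1/(2p')}\lesssim\norm{S_{a,b}}\le\norm{S_{a,b}^{+}}\lesssim[\psi,\nu]_{D_{p',b,a}}^{\max\{1,1/(p'-1)\}}$. The only thing to check carefully is that $S_{a,b}^{+}$ is self-transpose enough for the upper bound to transfer (it is, since $|1-z\bar w|$ is symmetric in $z,w$).

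\emph{Main obstacle.} The delicate point is Step 1–2 done with \emph{uniform} constants: getting the dyadic tents to cover Euclidean tents with the correct $\va$-measure comparabilities \emph{and} verifying that the fractional-integral exponent $-pa/(n+1+b)$ behaves well under the sparse reduction (in particular that the relevant testing condition for the dyadic positive operator is comparable to the stated $D_{p,a,b}$ characteristic, not a strictly larger ``bump''). Equivalently, one must be sure the Bekollé–Bonami $D_{p,a,b}$ condition — defined only over Carleson tents — is enough to control the full sparse operator, i.e. that tents are a genuine ``basis'' for this geometry; this is where the bulk of the technical work lies, and where the finitely-many-trees construction is essential.
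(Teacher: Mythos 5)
Your overall architecture (dyadic Bergman trees covering Carleson tents, sharp weighted bounds for the positive dyadic model, duality with $S_{b,a}$ to reduce $b\le-1$ to $b>-1$) matches the paper, but your lower bound contains a genuine gap. You propose to test $S_{a,b}$ on $f=\sigma\unit_{T_z}$ and evaluate on the \emph{same} tent, claiming $\mathrm{Re}\,(1-z\bar w)^{-(n+1+a+b)}\gtrsim(1-\abs{z}^2)^{-(n+1+a+b)}$ for $w\in T_z$. This is false in general: on $T_z$ the argument of $1-z\bar w$ is only bounded by a fixed angle (roughly $\arctan 2$), so after raising to the power $n+1+a+b$ (which can be large, e.g.\ for the Berezin transform $a=n+1+b$) the argument wraps past $\pi/2$ and the real part of the kernel changes sign on $T_z$; since $\sigma$ is arbitrary, the integral can then suffer cancellation and no lower bound of the claimed form follows. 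This is precisely why the paper invokes Bekoll\'e's Lemma~5: for each small tent $\widehat{K_\alpha}$ one finds a \emph{companion} tent $\widehat{K_\beta}$ of the same generation on which $\abs{Q_{a,b}(\sigma\unit_{\widehat{K_\alpha}})}$ is bounded below (the kernel has essentially constant argument between the two tents), and then applies a weak-type estimate. The mixed-tent estimates (testing on $\alpha$, measuring $\widetilde u$ on $\beta$, and vice versa) are symmetrized using the reverse bound $\avg{\widetilde u}_{\widehat{K_\alpha}}^{d\v{b}}\bigl(\avg{\sigma}_{\widehat{K_\alpha}}^{d\v{b}}\bigr)^{p-1}\vol{b}(\widehat{K_\alpha})^{-pa/(n+1+b)}\gtrsim 1$, and it is exactly this two-tent argument that produces $\mathcal{A}^{2p}$, hence the exponent $\tfrac{1}{2p}$; your attribution of the factor $2$ to a generic ``square-root loss from one test function'' is not an argument, and indeed your scheme, if it worked, would give $1/p$. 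You also omit the portion of the supremum over large tents (near the root and the whole ball), which the paper handles separately by testing on a bounded function and using $\sigma_b(\Bn)<\infty$ (Bekoll\'e's Lemma~4).

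Secondarily, your Step~2 is under-specified at the point that matters for sharpness: ``Carleson embedding plus its dual'' does not by itself yield the exponent $\max\{1,\tfrac{1}{p-1}\}$. The paper gets it by Lacey's device, avoiding extrapolation: for $1<p\le 2$ one proves the dualized estimate for $T(\sigma f)^{p-1}$ in $L_b^{p'}(\widetilde u)$, using subadditivity of $x\mapsto x^{p-1}$, one extraction of the $D_{p,a,b}$ characteristic per tent, and the boundedness of the weighted dyadic maximal functions $M_{\mathcal{T},\sigma}$ and $M_{\mathcal{T},\widetilde u}$; the case $p>2$ then follows by dualizing to the weight pair $(\psi,\rho)=(\sigma(1-\abs{z}^2)^{-p'a},\widetilde u)$ with $[\psi,\rho]_{D_{p',a,b}}=[u,\sigma]_{D_{p,a,b}}^{p'-1}$. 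If you intend a Sawyer-type testing route instead, you must verify that the testing constants are controlled by the stated power of $[u,\sigma]_{D_{p,a,b}}$, which is the nontrivial step you currently assert rather than prove.
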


The classical $B_p$ characteristic of a weight is 
$[u]_{B_{p,b}}=[u,\sigma]_{D_{p,0,b}}$ where $\sigma=u^{\frac{-p'}{p}}$. 
Therefore, as a corollary of Theorem \ref{T:1wtsat} we have the 
following theorem, which is new for $n\geq 2$:
\begin{theorem}\label{T:1wtproj}
Let $1<p<\infty$ let $u$ be a weight and let $P_b=S_{0,b}$ be the 
Bergman projection. There holds:
\begin{align*}
[u]_{B_{p,b}}^{\frac{1}{2p}}
\lesssim \norm{P_{b}:L_{b}^p(u)\to L_{b}^p(u)}
\lesssim [u]_{B_{p,b}}^{\max\left\{1,\frac{1}{p-1}\right\}}.
\end{align*}
\end{theorem}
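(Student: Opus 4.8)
The plan is to derive this statement as a direct corollary of Theorem \ref{T:1wtsat} by specializing the parameters. Set $a=0$. Since the hypothesis of the corollary already requires $1<p<\infty$, and the Bergman projection $P_b$ is only a well-defined bounded operator on the classical (unweighted) Bergman space when $b>-1$, we restrict to the case $b>-1$ of Theorem \ref{T:1wtsat}; this is exactly the regime in which the first set of displayed inequalities applies. With $a=0$ and $\sigma=u^{-p'/p}$ the dual weight, the definition of the characteristic $[u,\sigma]_{D_{p,a,b}}$ in \eqref{E:dpdef} collapses: the factor $\avg{u}_{T_z}^{d\v{pa+b}}$ becomes $\avg{u}_{T_z}^{d\v{b}}$ since $pa+b=b$, and the tent-measure power $\vol{b}(T_z)^{-pa/(n+1+b)}$ becomes $\vol{b}(T_z)^0=1$. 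Hence
\[
[u,\sigma]_{D_{p,0,b}}
=\sup_{z\in\Bn}\left(\avg{\sigma}_{T_z}^{d\v{b}}\right)^{p-1}\avg{u}_{T_z}^{d\v{b}},
\]
which is precisely the classical Bekolle--Bonami characteristic $[u]_{B_{p,b}}$ as recorded in the line preceding the statement.

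Next I would feed these choices into the chain of inequalities in Theorem \ref{T:1wtsat}. The lower bound gives
\[
[u]_{B_{p,b}}^{\frac{1}{2p}}=[u,\sigma]_{D_{p,0,b}}^{\frac{1}{2p}}\lesssim \norm{S_{0,b}:L_b^p(u)\to L_b^p(u)}=\norm{P_b:L_b^p(u)\to L_b^p(u)},
\]
using $S_{0,b}=P_b$ by definition. For the upper bound, Theorem \ref{T:1wtsat} yields
\[
\norm{P_b:L_b^p(u)\to L_b^p(u)}=\norm{S_{0,b}:L_b^p(u)\to L_b^p(u)}\leq \norm{S_{0,b}^{+}:L_b^p(u)\to L_b^p(u)}\lesssim [u,\sigma]_{D_{p,0,b}}^{\max\{1,\frac{1}{p-1}\}}=[u]_{B_{p,b}}^{\max\{1,\frac{1}{p-1}\}},
\]
where the first inequality is the pointwise domination $|S_{0,b}f|\le S_{0,b}^{+}|f|$ (valid because the kernel of $S_{0,b}$ is dominated in modulus by that of $S_{0,b}^{+}$), already built into the statement of Theorem \ref{T:1wtsat}. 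Concatenating the two displays gives exactly the asserted two-sided estimate.

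There is essentially no obstacle here beyond bookkeeping: the only points requiring a word of care are (i) checking that $a=0$ really does reduce $D_{p,a,b}$ to $B_{p,b}$ — which is immediate from \eqref{E:dpdef} once one observes $pa+b=b$ and the volume exponent vanishes — and (ii) confirming that the side condition $-a<b+1$ needed to define $S_{a,b}$ is satisfied, which holds trivially since $-a=0<b+1$ whenever $b>-1$. One might also remark that the result is stated unconditionally in $b$ in the corollary only because the Bergman projection $P_b$ is understood to be defined for $b>-1$; if one wished to cover $b\le -1$ as well, one would instead invoke the second half of Theorem \ref{T:1wtsat} with the transplanted weights $\psi,\nu$, but this is not needed for the classical projection and I would not pursue it.
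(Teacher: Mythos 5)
Your proposal is correct and follows exactly the paper's route: the paper derives Theorem \ref{T:1wtproj} as an immediate corollary of Theorem \ref{T:1wtsat} by setting $a=0$ and noting that $[u,\sigma]_{D_{p,0,b}}=[u]_{B_{p,b}}$, which is precisely what you do. Your side remarks (the side condition $-a<b+1$ reducing to $b>-1$, and the vanishing of the volume-power factor) are accurate bookkeeping checks and nothing more is needed.
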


Restricting attention to $\mathcal{B}_{b}:=S_{n+1+b,b}$, we define 
$[u]_{C_{p,b}}:=[u,\sigma]_{D_{p,n+1+b,b}}$. As a corollary of 
Theorem \ref{T:1wtsat} we have:
\begin{theorem}\label{T:1wtber}
Let $1<p<\infty$ let $u$ be a weight and let $\mathcal{B}_{b}:
=S_{n+1+b,b}$ be the Berezin transform. There holds:
\begin{align*}
[u]_{C_{p,b}}^{\frac{1}{2p}}
\lesssim \norm{\mathcal{B}_{b}:L_{b}^p(u)\to L_{b}^p(u)}
\lesssim [u]_{C_{p,b}}^{\max\left\{1,\frac{1}{p-1}\right\}}.
\end{align*}
\end{theorem}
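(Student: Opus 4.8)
The plan is to deduce Theorem~\ref{T:1wtber} directly from Theorem~\ref{T:1wtsat} by specializing the parameters to $a=n+1+b$; beyond checking that the hypotheses match and that the characteristics coincide under this choice, there is nothing new to do.

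First I would verify the standing assumption $-a<b+1$ attached to $S_{a,b}$ and $S_{a,b}^{+}$. With $a=n+1+b$ this reads $-(n+1+b)<b+1$, i.e.\ $b>-\tfrac{n+2}{2}$, which is automatic whenever $b>-1$ since $n\ge 1$. Hence the Berezin transform $\mathcal{B}_b=S_{n+1+b,b}$, together with its positive companion $S_{n+1+b,b}^{+}$, is covered by the $b>-1$ branch of Theorem~\ref{T:1wtsat}. I would also record that $pa+b=p(n+1+b)+b>-1$, so the measure $d\v{pa+b}$ appearing in the $D_{p,a,b}$-characteristic is the normalized one, and that evaluating the exponent $pa+b$ at $a=n+1+b$ reproduces exactly the exponent built into the definition $[u]_{C_{p,b}}:=[u,\sigma]_{D_{p,n+1+b,b}}$, where $\sigma=u^{-p'/p}$.

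With these bookkeeping points in hand I would simply invoke Theorem~\ref{T:1wtsat} with $a=n+1+b$, which yields
\begin{align*}
[u,\sigma]_{D_{p,n+1+b,b}}^{\frac{1}{2p}}
&\lesssim \norm{S_{n+1+b,b}: L_{b}^p(u) \to L_{b}^p(u)}\\
&\le \norm{S_{n+1+b,b}^{+}: L_{b}^p(u) \to L_{b}^p(u)}
\lesssim [u,\sigma]_{D_{p,n+1+b,b}}^{\max\left\{1,\frac{1}{p-1}\right\}}.
\end{align*}
The first and last quantities are by definition $[u]_{C_{p,b}}^{\frac{1}{2p}}$ and $[u]_{C_{p,b}}^{\max\left\{1,\frac{1}{p-1}\right\}}$, so the outer inequalities are precisely the asserted two-sided estimate for $\mathcal{B}_b$. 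Since $\abs{S_{n+1+b,b}f}\le S_{n+1+b,b}^{+}\abs{f}$ pointwise, the middle inequality shows the same bound also holds if one prefers to call $S_{n+1+b,b}^{+}$ the Berezin transform, as in the Introduction.

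I do not expect any genuine obstacle in this corollary: all of the substantive work — building the dyadic model on $\Bn$, running the sparse/testing argument, and converting the Bekoll\'e--Bonami-type characteristic into the operator norm — is already carried out in the proof of Theorem~\ref{T:1wtsat}. The only thing that requires a little care is the arithmetic of the exponent $pa+b$ at $a=n+1+b$, which is exactly what the definition of $[u]_{C_{p,b}}$ was set up to encode.
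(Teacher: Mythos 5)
Your proposal is correct and is exactly what the paper intends: Theorem~\ref{T:1wtber} is stated as an immediate corollary of Theorem~\ref{T:1wtsat} obtained by specializing $a=n+1+b$ and unwinding the definition $[u]_{C_{p,b}}=[u,\sigma]_{D_{p,n+1+b,b}}$, and the paper gives no further argument. Your bookkeeping checks (the constraint $-a<b+1$ reduces to $b>-\tfrac{n+2}{2}$, the exponent $pa+b>-1$, and the remark that the positive operator $S_{n+1+b,b}^{+}$ is controlled the same way) are all accurate and fill in the small amount of verification the paper leaves implicit.
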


The following corollary of Theorem \ref{T:1wtsat} is well--known. 
See for example, \cite{Zhu2005,HedKorZhu2000}.
\begin{corollary}
For $1<p<\infty$ the operator $S_{a,b}$ is bounded from $L_{t}^{p}$ to itself 
if and only if $-pa< t+1 < p(b+1)$.
\end{corollary}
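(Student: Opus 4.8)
The plan is to deduce the unweighted boundedness statement from Theorem \ref{T:1wtsat} by specializing to the trivial weight $u\equiv 1$ after a change of exponents, since $L_t^p$ is exactly $L_b^p(u)$ for a suitable power weight $u=(1-|z|^2)^{t-b}$. Concretely, to study $S_{a,b}\colon L_t^p\to L_t^p$ I would first reduce to the case at hand by writing $S_{a,b}$ acting on $L_t^p$ as a weighted problem on $L_b^p(u)$ with $u(z)=(1-|z|^2)^{t-b}$, so that $u\,d\v{b}$ is comparable to $d\v{t}$. Then the relevant characteristic in Theorem \ref{T:1wtsat} is a supremum over Carleson tents of an explicit product of averages of powers of $(1-|z|^2)$; because the measures $d\v{s}$ all assign to a tent $T_z$ a quantity comparable to $(1-|z|)^{n+1+s}$ (up to the normalizing constants), this supremum reduces to checking the finiteness of $\sup_{z}(1-|z|)^{\gamma}$ for a single exponent $\gamma$ built linearly out of $p,a,b,t,n$. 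That exponent is finite (equivalently, the characteristic is finite) precisely when $\gamma\ge 0$, which after simplification is the double inequality $-pa<t+1<p(b+1)$; the two strict inequalities correspond respectively to the $b>-1$ branch and, after the $\psi,\nu$ duality substitution, to the $b\le-1$ branch of the theorem.

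The key steps, in order, are: (i) record the standard tent estimate $\v{s}(T_z)\simeq (1-|z|)^{n+1+s}$ for $s>-1$, and the elementary fact that $(1-|z|^2)$ is essentially constant ($\simeq 1-|z|$) on $T_z$; (ii) with $u=(1-|z|^2)^{t-b}$ compute $\avg{u}_{T_z}^{d\v{pa+b}}\simeq (1-|z|)^{t-b}$ and, since $\sigma=u^{-p'/p}=(1-|z|^2)^{-(t-b)p'/p}$, compute $\left(\avg{\sigma}_{T_z}^{d\v{b}}\right)^{p-1}\simeq (1-|z|)^{-(t-b)(p-1)\cdot p'/p}=(1-|z|)^{-(t-b)}$ when the relevant exponents exceed $-1$ so that the averages make sense — this is where one must track the constraint that the weighted measures are genuinely finite on tents; (iii) multiply to see that in fact the product of the two factors, divided by the normalization, collapses to a single power $(1-|z|)^{\gamma}$ with $\gamma$ a linear expression, and $\sup_z(1-|z|)^\gamma<\infty \iff \gamma\ge 0$; (iv) unwind $\gamma\ge 0$ into $-pa<t+1<p(b+1)$, handling $b>-1$ and $b\le-1$ via the two branches of Theorem \ref{T:1wtsat} (for $b\le -1$ the weight $\psi$ absorbs the extra power $(1-|z|^2)^{-(p'b+pa)/p}$, and the same computation applies with $(p,a,b)$ replaced by $(p',b,a)$); (v) for the converse direction, use the lower bound $[u,\sigma]_{D_{p,a,b}}^{1/2p}\lesssim \norm{S_{a,b}}$ from the theorem: if $\gamma<0$ the characteristic is infinite, forcing $\norm{S_{a,b}}=\infty$, so the arithmetic condition is necessary as well as sufficient.

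The only real subtlety — and the step I expect to need the most care — is the bookkeeping in (ii)–(iii): one must make sure that every average $\avg{\cdot}_{T_z}^{d\v{s}}$ that appears is taken against a measure with exponent $>-1$ (so the integrals over tents converge and the estimate $\v{s}(T_z)\simeq(1-|z|)^{n+1+s}$ applies), and one must correctly combine the exponent $t-b$ with the weights $p-1$, $p'/p$, and the normalization power $-pa/(n+1+b)$ coming from the $\widetilde{u}$ form of the characteristic in \eqref{E:dpdef}. Once the exponents are assembled correctly, the powers of $(1-|z|)$ telescope and the condition drops out; there is no hard analysis here beyond Theorem \ref{T:1wtsat} itself, which does all the heavy lifting. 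I would also remark that this recovers the classical results in \cite{Zhu2005,HedKorZhu2000}, as noted, so the computation can be cross-checked against the known range.
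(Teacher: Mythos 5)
Your approach is exactly the paper's: set $u(z)=(1-|z|^2)^{t-b}$, so that $L_t^p=L_b^p(u)$, and check when the $D_{p,a,b}$ characteristic is finite, with necessity following from the lower bound in Theorem \ref{T:1wtsat}. The paper leaves the computation to the reader, and your expansion is substantially right, but two points in steps (iii)--(iv) are misstated.

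First, the finiteness condition does \emph{not} come from $\sup_z(1-|z|)^\gamma<\infty$ for a nonzero $\gamma$. If you carry out the computation you sketch in (ii), the two factors give $\avg{u}_{T_z}^{d\v{pa+b}}\simeq(1-|z|)^{t-b}$ and $\bigl(\avg{\sigma}_{T_z}^{d\v{b}}\bigr)^{p-1}\simeq(1-|z|)^{-(t-b)}$, and their product is $\simeq 1$, i.e.\ $\gamma=0$ identically. So the characteristic is uniformly comparable to $1$ \emph{whenever it makes sense}, and the actual constraints are the local-integrability conditions you flag in your final paragraph: the integrand in $\int_{T_z}u\,d\v{pa+b}$ has power $t+pa$, which must exceed $-1$ (this is $t+1>-pa$), and the integrand in $\int_{T_z}\sigma\,d\v{b}$ has power $b-(t-b)p'/p$, which must exceed $-1$ (equivalent, after clearing denominators and using $p+p'=pp'$, to $t+1<p(b+1)$). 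The condition is precisely that both integrals over tents converge, not that a net exponent is nonnegative.

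Second, the remark that "the two strict inequalities correspond respectively to the $b>-1$ branch and $\ldots$ to the $b\le -1$ branch" is not correct. Both inequalities arise within a single branch: one from the finiteness of the $u$-average, the other from the $\sigma$-average. The two branches of Theorem \ref{T:1wtsat} are distinguished only by the sign of $b+1$, and in the $b\le -1$ case you must re-run the same two-integral computation for $[\psi,\nu]_{D_{p',b,a}}$ with the substituted weight; doing so returns the same pair of conditions $-pa<t+1<p(b+1)$. With those two corrections the argument is complete and agrees with the paper's intended proof.
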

The proof is to take $u(z)=(1-\abs{z}^2)^{t-b}$ and to note that the integrals 
in the definition of the $D_{p,a,b}$ condition are finite if and only if 
$-pa< t+1 < p(b+1)$. The details are left to the reader.

% \begin{proof}
% We prove this using Theorem \ref{T:1wtsat}. We provide the details only for 
% the case $b>-1$.  To prove this for when $b\leq -1$, one may use a similar argument as below and the 
% second assertion in Theorem \ref{T:1wtsat}.
% 
% First, note that there holds:
% \begin{align*}
% \norm{S_{a,b}:L_{t}^{p}\to L_{t}^{p}}
% =\norm{S_{a,b}:L_{b}^{p}(u)\to L_{b}^{p}(u)},
% \end{align*}
% where $u(z)=(1-\abs{z}^2)^{t-b}$. Thus, if $S_{a,b}$ is bounded from 
% $L_{t}^{p}$ to itself, it follows that $u\in B_p$. The quantity 
% $\int_{T_z}ud\v{pa+b}$ is finite if and only if $pa+b+(t-b)>-1$ if and only if 
% $-pa<t+1$. The quantity $\int_{T_z}u^{-\frac{p'}{p}}d\v{b}$ is finite 
% if and only if $\frac{1}{p-1}(b-t)+b > -1$ if and only if
% $p(b+1)>t+1$. Since $u$ does not vanish, it follows that if the $D_{p,a,b}$ 
% characteristic of $u$ is finite, we must have $-pa< t+1 < p(b+1)$. 
% 
% On the other hand, if $-pa< t+1 < p(b+1)$ then $u\in B_p$ and so 
% $S_{a,b}$ is bounded $L_{t}^p$ to itself. Indeed, using \eqref{E:volstuff} 
% we easily see that $\avg{u^{-\frac{p'}{p}}}_{T_z}^{d\v{b}} \simeq 
% (1-\abs{z}^2)^{\frac{b-t}{p-1}}$ and $\avg{u}_{T_z}^{d\v{pa+b}}\simeq 
% (1-\abs{z}^2)^{t-b}$ (where the implied constants depend on $a,b,t,p$) and thus 
% $\left(\avg{u^{-\frac{p'}{p}}}_{T_z}^{d\v{b}}\right)^{p-1}\avg{u}_{T_z}^{d\v{pa+b}}
% \simeq (1-\abs{z}^2)^{b-t}(1-\abs{z}^2)^{t-b}=1$. 
% \end{proof}

It is also well--known by now that $P_{b}$ is bounded from 
$L_{b}^{p}(u)$ to itself if any only if $u$ is a $B_{p,b}$ 
weight. This was proven for the disc in \cite{BekBom1978} 
and for the ball in \cite{Bek1981}. The sharp dependence of 
the operator norm on the $B_{p,b}$ characteristic was given 
by S. Pott- M.C. Reguera in \cite{PottReg2013} for the Bergman space 
on the disc; namely the case when $n=1$. 

% Next, let $w$ and $u$ both be weights. We would like to know for 
% which weights we have the two--weight inequality 
% $\norm{T:L^p(ud\va)\to L^p(wd\va)}$ where $T$ is $B$, $P$, or 
% $P_+$. It is well--known that this question is much more subtle than 
% the one--weight inequality. 

Our technique is that of dyadic operators. We show that the operators 
of interest can be dominated by positive dyadic operators and we use the 
techniques of modern dyadic harmonic analysis to deduce the desired 
estimates. This is the approach that S. Pott-- M. C. Reguera took in \cite{PottReg2013}, 
though we use a recent but similar approach of Lacey \cite{Lac2015} that 
avoids an extrapolation argument.

% \textcolor{red}{IS THIS PARAGRAPH NEEDED??}
% Dominating operators by sparse operators is now a standard 
% technique in harmonic analysis. It was first developed by
% Lerner in his approach to the $A_2$ theorem \cite{Ler2013}
% \textcolor{red}{IS THIS RIGHT}, and 
% was used by Pott--Reguera in \cite{PottReg2013} as well as many
% other authors \cites{PUT STUFF HERE}. 
% This technique can be used to obtain sharp estimates in a one--weight 
% setting and so the sparse operators are, in this sense, not ``too big''.  

The outline of the paper is as follows. In Section \ref{bckgrnd} we briefly 
give requisite background information and we recall  
a dyadic structure for $\Bn$ given in, for example, \cite{ArcRocSaw2006}.
In Section \ref{dom}, we show that $S_{a,b}^{+}$ is equivalent to a finite sum 
of dyadic operators and in Section \ref{proofs}, we prove Theorem \ref{T:1wtsat}.
Section \ref{sharp} contains an example showing that the upper bound 
in Theorem \ref{T:1wtsat} is sharp. Finally, Section \ref{concl} contains 
concluding remarks.

%%%%%%%%%%%%%%%%%%%%%%%%%%%%%% SECTION  SECTION SECTION
%%%%%%%%%%%%%%%%%%%%%%%%%%%%%% SECTION  SECTION SECTION 
\section{Background Information and Notation}\label{bckgrnd}
The following notation will be used throughout the paper. 
For a weight $u$ and a subset $E\subset \Bn$, we set $u_t(E)=\int_{E}
u(z)d\v{t}(z)$ and $\vol{t}(E)=\int_{E}d\v{t}$. For a measure, $\mu$, 
and a subset $E\subset \Bn$ we define 
$\avg{f}_{E}^{d\mu}:=\frac{1}{\mu(E)}\int_{E}f(z)d\mu(z)$.

We begin by recalling some geometric facts on the ball $\Bn$. 
Let $\varphi_z$ be the involutive automorphism of $\Bn$ that 
interchanges $z$ and $0$. That is, $\varphi_z$ is a holomorphic 
function from $\Bn$ to itself that satisfies $\varphi_z\circ\varphi_z=\id$, $\varphi_z(0)=z$, and $\varphi_z(z)=0$. Using the maps $\varphi_z$ we
can define the so--called Bergman metric, $\beta$ on $\Bn$, by:
\begin{align*}
\beta(z,w)=\frac{1}{2}\log\frac{1+\abs{\varphi_z(w)}}
  {1-\abs{\varphi_z(w)}}.
\end{align*}
Let $B_{\beta}(z,r)$ be the ball in the Bergman metric of radius 
$r$ centered at $z$. It is well--known (see for example, \cite{Zhu2005})
that for $w\in B_{\beta}(z,r)$ there holds:
\begin{align}\label{E:volstuff}
\vol{t}(B_{\beta}(z,w))
\simeq \abs{1-\overline{z}w}^{n+1+t}
\simeq \left(1-\abs{z}^2\right)^{n+1+t}
\simeq \left(1-\abs{w}^2\right)^{n+1+t}.
\end{align}
It is worthwhile to note that we will make heavy use of this and 
similar estimates. 

We next introduce a dyadic structure on the ball. The 
construction we use is the one given in, for example, \cite{ArcRocSaw2006}. 
We start by fixing two parameters, $\theta,\lambda>0$. These 
parameters will roughly correspond to the ``sizes'' of Carleson 
boxes.
% Since the ball 
% isn't flat, the dyadic structure on the ball is more subtle than the 
% dyadic structure on $\mathbb{R}^n$ and the basic building blocks are the 
% Bergman balls of a fixed radius $r$. Indeed, these Bergman balls 
% will roughly play the role of the upper half of Carleson boxes.  

For $N\in\mathbb{N}$, 
let $\mathbb{S}_{N\theta}$ be the sphere of radius $N\theta$ in the 
Bergman metric. We can find a sequence of points, $E_N=\{w_j\}_{j=1}^{J_N}$
and a corresponding sequence of Borel subsets, $\{Q_{j}^{N}\}_{j=1}^{J_N}$ of 
$\sn$ that satisfy:
\begin{align}\label{setsonsphere}
 &(i)\hspace{.1in} \sn=\cup_{j=1}^{J_N}Q_{j}^{N},\\
 &(ii) Q_{j}^{N}\cap Q_{i}^{N}=\emptyset
 \textnormal{ when } i\neq j,\\
 &(iii) \sn\cap B_{\beta}(w_j,\lambda)\subset Q_{j}^{N}
  \subset \sn\cap B_{\beta}(w_j,C\lambda).
\end{align}

Let $P_{N\theta}z$ be the radial projection of $z$ onto the 
sphere $\sn$. Define subsets, $K_j^{N}$ of $\Bn$ by:
\begin{align*}
K_{1}^{0}&:=\{z\in\Bn:\beta(0,z)<\theta\} \\
K_{j}^{N}&:=\{z\in\Bn:N\theta\leq\beta(0,z)<(N+1)\theta 
  \textnormal{ and } P_{N\theta}z\in Q_{j}^{N}\}, N\geq 1, j\geq 1.
\end{align*}
Now, let $c_j^{N}\in K_{j}^{N}$ be defined by 
$P_{(N+\frac{1}{2})\theta}w_{j}^{N}$. The sets $K_j^N$ are 
referred to as $kubes$ and the points $c_j^N$ are the centers of 
the kubes.

Now we define a tree structure $\mathcal{T}:=\{c_j^N\}$ on the 
centers of the kubes. We say that $c_{i}^{N+1}$ is a child of 
$c_{j}^{N}$ if $P_{N\theta}c_{i}^{N+1}\in Q_{j}^{N}$. 

We will denote elements of the tree by the letters $\alpha$ and $\beta$
and $K_\alpha$ will be the kube with center $\alpha$. We 
will also abuse notation and use, for example, $\alpha$ to denote
both an element of a tree $\mathcal{T}$ and the center of the 
corresponding kube or, in fact, any convenient element of 
the kube. 
%This reflects the philosophy that all elements of a 
%kube are, in some sense, comparable.   
There is the usual partial 
order on the tree: if $\alpha,\beta\in\mathcal{T}$ we say that 
$\beta\geq\alpha$ if $\beta$ is a descendant of $\alpha$. 
We will use $\widehat{K_\alpha}$ to be the dyadic tent under $K_\alpha$.
That is:
\begin{align*}
\widehat{K_\alpha}
:=\bigcup_{\beta\in\mathcal{T}:\beta\geq\alpha}K_\beta.
\end{align*}
We will
also use $d(\alpha)$ to denote the ``generation'' of $\alpha$, 
or the distance in the tree from $\alpha$ to the root. Thus, 
if $N\theta<\beta(0,\alpha)<(N+1)\theta$, then 
$d(\alpha)=N$.

We have the following lemma proven in \cite{ArcRocSaw2006}.
\begin{lemma}\label{L:tree}
Let $t>-1$ and 
let $\mathcal{T}$ be a tree constructed with positive parameters
$\lambda$ and $\theta$. Then the tree satisfies the following 
properties:
\begin{itemize}
 \item [(i)] $\Bn=\cup_{\alpha\in\mathcal{T}}K_\alpha$ and 
 the kubes $K_\alpha$ are pairwise disjoint. Furthermore, there
 are constants, $C_1$ and $C_2$ depending on $\lambda$ and $\theta$
 such that for all $\alpha\in\mathcal{T}$ there holds:
 \begin{align*}
  B_{\beta}(\alpha,C_1)\subset K_\alpha
  \subset B_{\beta}(\alpha, C_2), 
 \end{align*}
 \item[(ii)] $\vol{t}\left(\widehat{K_\alpha}\right)
 \simeq \vol{t}\left(K_{\alpha}\right)$,
 \item[(iii)] $\vol{t}(T_\alpha)\simeq\vol{t}(\widehat{K_\alpha})\simeq
 \vol{t}(K_\alpha)\simeq (1-\abs{\alpha}^2)^{n+1+t}$, 
 \item[(iv)] Every element of $\mathcal{T}$ has at most $e^{2n\theta}$
 children. 
\end{itemize}
\end{lemma}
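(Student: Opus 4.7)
The plan is to deduce (i) and (iv) directly from the construction, and then derive (iii) and (ii) by combining them with the volume identity \eqref{E:volstuff}.

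For (i), disjointness and covering are immediate from the definition: the annuli $\{N\theta\le\beta(0,z)<(N+1)\theta\}$ partition $\Bn$, and within each annulus the preimages under $P_{N\theta}$ of the $Q_j^N$ partition it. The ball sandwich $B_\beta(\alpha,C_1)\subset K_\alpha\subset B_\beta(\alpha,C_2)$ is a triangle-inequality computation in the Bergman metric: the upper bound combines the radial jump $\beta(z,P_{N\theta}z)\le\theta$, the angular distance $\beta(P_{N\theta}z,w_j^N)\lesssim\lambda$ (from the containment in \eqref{setsonsphere}), and $\beta(w_j^N,\alpha)\le\theta/2$. For the lower bound, take $C_1$ small enough that a ball of that Bergman radius about $\alpha$ remains in the correct annulus and projects into $\sn\cap B_\beta(w_j^N,\lambda)\subset Q_j^N$. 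For (iv), the children of $\alpha=c_j^N$ have spherical anchors $w_i^{N+1}$ on $\mathbb{S}_{(N+1)\theta}$ lying in a region of bounded Bergman diameter, with the balls $\{B_\beta(w_i^{N+1},\lambda)\}_i$ pairwise disjoint there. A packing argument, combined with the fact that the intrinsic surface volume of $\mathbb{S}_{M\theta}$ grows like $e^{2nM\theta}$, produces the count $e^{2n\theta}$ per generational step.

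Part (iii) then follows quickly: by (i) and \eqref{E:volstuff}, $\vol{t}(K_\alpha)\simeq(1-|\alpha|^2)^{n+1+t}$, and the analogous estimate $\vol{t}(T_\alpha)\simeq(1-|\alpha|^2)^{n+1+t}$ is a standard Carleson-tent computation. For (ii), iterate (iv): descendants of $\alpha$ at generation $d(\alpha)+k$ number at most $e^{2nk\theta}$, each carrying $\vol{t}$-volume $\simeq e^{-2(d(\alpha)+k)\theta(n+1+t)}$, and summing over $k\ge 0$ gives
\begin{equation*}
\vol{t}(\widehat{K_\alpha})
\lesssim \vol{t}(K_\alpha)\sum_{k\ge 0}e^{-2k\theta(1+t)}
\simeq \vol{t}(K_\alpha),
\end{equation*}
where the geometric series converges precisely because $t>-1$. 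The reverse inequality $\vol{t}(K_\alpha)\le\vol{t}(\widehat{K_\alpha})$ is trivial from the inclusion. The main technical obstacle is the surface-measure comparison underlying (iv); once that is in hand, the hypothesis $t>-1$ is exactly what makes the tree behave like a standard dyadic grid of doubling exponent $n+1+t$, and everything else is routine bookkeeping.
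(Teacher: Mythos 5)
The paper does not actually prove Lemma \ref{L:tree}: it is quoted verbatim from \cite{ArcRocSaw2006}, so there is no internal argument to compare against. Your sketch is essentially a reconstruction of the Arcozzi--Rochberg--Sawyer argument, and its overall architecture is sound: (i) and (iv) from the construction, $\vol{t}(K_\alpha)\simeq(1-\abs{\alpha}^2)^{n+1+t}$ from the ball sandwich together with \eqref{E:volstuff}, the tent estimate by the standard computation, and (ii) by summing over generations, which correctly isolates $t>-1$ as the source of the convergent factor $e^{-2k\theta(1+t)}$. For the inner containment in (i) you should say explicitly that the radial projection $P_{N\theta}$ is bi-Lipschitz for the Bergman metric on the annulus $\{N\theta\le\beta(0,z)<(N+1)\theta\}$ with constant depending only on $\theta$ (the metric weights $(1-\abs{z}^2)^{-1}$ and $(1-\abs{z}^2)^{-2}$ vary across the annulus only by factors $e^{2\theta}$, $e^{4\theta}$); that is the fact which lets you shrink $C_1$ so that $P_{N\theta}z$ lands in $\mathbb{S}_{N\theta}\cap B_\beta(w_j^N,\lambda)$.

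The one step I would push back on is the chain (iv) $\Rightarrow$ (ii). A packing argument of the kind you describe yields ``at most $C(n)\,e^{2n\theta}$ children,'' with an absolute constant coming from the ratio of the inner and outer radii in \eqref{setsonsphere} and from the comparison of induced surface measures; it does not by itself deliver the clean bound $e^{2n\theta}$. If you then iterate the per-step bound, the number of depth-$k$ descendants is only controlled by $C(n)^k e^{2nk\theta}$, and your series becomes $\sum_k \bigl(C(n)e^{-2\theta(1+t)}\bigr)^k$, which diverges whenever $\theta(1+t)\le\tfrac{1}{2}\log C(n)$ --- a real possibility, since $\theta$ is dictated elsewhere (via $\delta=e^{-2\theta}$ in Lemma \ref{L:covering}) and $t>-1$ is arbitrary. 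The robust fix is not to iterate: either count the depth-$k$ descendants directly by a single packing argument at generation $d(\alpha)+k$ (their anchors lie in a region of pseudo-metric radius $\simeq e^{-2d(\alpha)\theta}$, giving $\lesssim e^{2nk\theta}$ with a $k$-independent constant), or bypass (iv) altogether by noting $\widehat{K_\alpha}\subset T'$ for a bounded dilate $T'$ of the Carleson tent $T_\alpha$ and computing $\vol{t}(T')\simeq(1-\abs{\alpha}^2)^{n+1+t}$ directly, which again uses only $t>-1$. With either repair your outline gives the full lemma; for every use made of (iv) in this paper only the existence of a finite, $\theta$-dependent bound matters, not the exact constant $e^{2n\theta}$.
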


Note that if $z\in S_{N\theta}$, then 
$N\theta=\frac{1}{2}\log\frac{1+\abs{z}}{1-\abs{z}}$ and therefore, 
$\abs{z}=\frac{e^{2N\theta-1}}{e^{2N\theta}+1}$.
For the rest of the paper, set $r_{N\theta}
:=\frac{e^{2N\theta}-1}{e^{2N\theta}+1}$. Note that we 
have $1-r_{N\theta}\simeq e^{-2N\theta}$. Therefore, if 
$\beta(0,\alpha)=(N+\frac{1}{2})\theta$ (that is, $d(\alpha)=N$), we have: 
\begin{align*}
(1-\abs{\alpha}^2)^{n+1+t}
= \left(1-\left(\frac{e^{2(N+\frac{1}{2})\theta}-1}
  {e^{2(N+\frac{1}{2})\theta}+1}\right)^2\right)^{n+1+t}
\simeq e^{-2(N+\frac{1}{2})\theta(n+1+t)}
\simeq e^{-2N\theta(n+1+t)}.
\end{align*}
Therefore, if $d(\alpha)=N$, there holds:
\begin{align*}
\vol{t}(T_\alpha)
\simeq \vol{t}(\widehat{K_\alpha})
\simeq \vol{t}(K_\alpha)
\simeq (1-\abs{\alpha}^2)^{n+1+t}
\simeq e^{-2N\theta(n+1+t)}.
\end{align*}

% An important principle in dyadic harmonic analysis is that every 
% ``cube'' (the precise definition of ``cube'' depends on the setting)
% is contained in a ``dyadic cube'' (again, the precise meaning depends 
% on the setting) of comparable volume. This is also true in the present
% setting. The following discussion will not be surprising to experts, 
% but since we could not find a reference in the literature, we have included it here.
% We will essentially follow the construction in \cite{HytKai2012}, which is much more general than the 
% one we use here.

We now show that every Carleson tent is well--approximated by a dyadic tent. 
To do this, we start with a special case of a lemma from \cite{HytKai2012}.

Let $\rho$ be the pseudo--metric on $\partial{\Bn}$ given by $\rho(z,w)
=\abs{1-\overline{z}w}$. As usual, $D(z,r):=\{w\in\partial\Bn:
\rho(z,w)<r\}$. A system of dyadic cubes of calibre $\delta$
is a collection of Borel subsets $\mathcal{D}:=\{Q_{i}^{k}\}_{i,k\in\mathbb{Z}}$ 
and points $\{z_i^{k}\}_{i,k\in\mathbb{Z}}$ in $\partial{\Bn}$
that satisfy:
\begin{itemize}
 \item [(i)] There are constants $c_1,C_2$ such that for every 
 $k,i\in\mathbb{Z}$ there holds:
 \begin{align*}
  D(z_i^{k},c_1\delta^{k})\subset Q_{i}^{k} 
  \subset D(z_i^{k},C_{2}\delta^{k}). 
 \end{align*}
 \item [(ii)] For all $k\in\mathbb{Z}$ there holds $\partial{\Bn}
 =\cup_{i\in\mathbb{Z}}Q_{i}^{k}$ and the sets are disjoint.
 \item [(iii)] If $Q,R\in\mathcal{D}$ and $Q\cap R\neq \emptyset$, then 
 either $Q\subset R$ or $R\subset Q$. 
\end{itemize}

We have the following lemma which is a special case 
of \cite{HytKai2012}*{Theorem 4.1}. 
\begin{lemma}[Hyt\"onen and Kairema]\label{L:hytkai}
For every $\delta>0$ there is an $M\in\mathbb{N}$ such that 
there is a collection of dyadic systems of cubes 
$\{\mathcal{D}_l\}_{l=1}^{M}$ with the following property:
For every disc $D(z,r):=\{w\in\partial{\Bn}:\rho(z,w)<r\}$, 
there is a $1\leq t \leq M$ such that there is a dyadic cube 
$Q_{i}^{k}\in\mathcal{D}_{t}$ with $D(z,r)\subset Q_{i}^{k}$
and $\delta^{k}\simeq r$ where the implied constants are 
independent of $z$ and $r$. 
\end{lemma}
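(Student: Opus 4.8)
The goal here is to prove Lemma~\ref{L:hytkai} as stated, but since it is explicitly presented as a special case of Theorem 4.1 of \cite{HytKai2012}, the plan is to reduce it to that cited result by checking that the ambient space $(\partial\Bn,\rho)$ with the surface measure is a geometrically doubling quasi-metric space, and then to extract exactly the ``an arbitrary ball is contained in a dyadic cube of comparable size'' conclusion from the existence of finitely many adjacent dyadic systems. First I would recall that $\rho(z,w)=|1-\overline z w|$ is a quasi-metric on the sphere $\partial\Bn$ (the quasi-triangle inequality is standard, and $\rho(z,w)=0$ iff $z=w$ on the sphere), and that the associated balls $D(z,r)$ satisfy the doubling property $\sigma(D(z,2r))\lesssim \sigma(D(z,r))$ for the normalized surface measure $\sigma$ — indeed $\sigma(D(z,r))\simeq r^{n}$ for $0<r\lesssim 1$. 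Geometric doubling (every $\rho$-ball of radius $r$ can be covered by at most $N$ balls of radius $r/2$) follows from this measure doubling. These are precisely the hypotheses under which \cite{HytKai2012}*{Theorem 4.1} produces, for each fixed $\delta$ small enough, a finite family $\{\mathcal{D}_l\}_{l=1}^{M}$ of dyadic systems.

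Second, I would state what Theorem 4.1 of \cite{HytKai2012} gives: finitely many dyadic systems $\mathcal D_l$, each with parameters $c_1\delta^k$ and $C_2\delta^k$ as in (i)--(iii) above, such that \emph{every} ball $D(z,r)$ is contained in some cube $Q\in\bigcup_l \mathcal D_l$ with $\mathrm{diam}(Q)\le C r$, equivalently with generation $k$ satisfying $\delta^{k}\simeq r$. The passage is: given $D(z,r)$, pick $k$ with $\delta^{k}\simeq r$ (so $\delta^{k}\ge r$ but also $\delta^{k}\lesssim r$); the adjacent-systems conclusion of \cite{HytKai2012} guarantees some $l$ and some $Q_i^k\in\mathcal D_l$ with $D(z, c\delta^k)\subset Q_i^k$ for a uniform $c$, and choosing $k$ appropriately (shrinking $\delta^k$ by at most a bounded factor, i.e. decrementing $k$ by a bounded amount) makes $r\le c\delta^k$, hence $D(z,r)\subset D(z,c\delta^k)\subset Q_i^k$. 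Absorbing that bounded shift into the constant $M$ (one may take the union of the systems over the finitely many shifted scales, or simply note the shift is by a fixed integer) yields the clean statement with $\delta^k\simeq r$ and implied constants independent of $z,r$. I would also remark that for our application only $0<r\lesssim 1$ matters, since the relevant discs $D(z,r)$ arise from Carleson tents $T_\zeta$ with $1-|\zeta|<1$, so no issue arises from the finite diameter of $\partial\Bn$.

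The only real subtlety — and the one place the argument is not a verbatim citation — is matching the normalization of the ``calibre $\delta$'' and reconciling the fact that \cite{HytKai2012} works at \emph{all} scales $k\in\Z$ whereas the sphere has bounded diameter, so large cubes degenerate to the whole space $\partial\Bn$; this is harmless, as those scales only ever help (the whole sphere trivially contains $D(z,r)$), but one should note it so that the ``$\delta^k\simeq r$'' conclusion is only claimed in the regime $r\lesssim 1$ where it is meaningful. I expect the main obstacle, such as it is, to be purely bookkeeping: verifying that the containment-with-comparable-size conclusion is genuinely what \cite{HytKai2012}*{Theorem 4.1} delivers (their theorem is phrased in terms of adjacent dyadic systems with a ``nearby cube'' property) and tracking how the choice of scale $k$ interacts with the quasi-metric constants so that the implied constants in $\delta^k\simeq r$ are uniform. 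No new estimates are needed beyond the standard geometry of $(\partial\Bn,\rho,\sigma)$ recalled in Section~\ref{bckgrnd}.
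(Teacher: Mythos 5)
Your proposal is correct and takes the same route as the paper: the paper offers no proof of this lemma, stating only that it is a special case of \cite{HytKai2012}*{Theorem 4.1}, and your verification that $(\partial\Bn,\rho,\sigma)$ is a geometrically doubling quasi-metric space (quasi-triangle for $\rho=|1-\overline{z}w|$, $\sigma(D(z,r))\simeq r^n$, measure doubling implies geometric doubling) together with the scale-matching bookkeeping is precisely what is needed to justify that citation. One minor point worth noting: the lemma as stated in the paper claims the conclusion ``for every $\delta>0$,'' whereas Hyt\"onen--Kairema requires the calibre to be sufficiently small relative to the quasi-metric constant; your phrasing ``for each fixed $\delta$ small enough'' is the accurate reading, and this causes no difficulty in Lemma \ref{L:covering} since one only needs some admissible $\delta$ and then sets the Bergman-tree parameter $\theta$ via $\delta=e^{-2\theta}$.
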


% We use this property to show that every Carleson tent is contained 
% in a dyadic tent of comparable volume. 
\begin{lemma}\label{L:covering}
There is a finite collection of Bergman trees $\{\mathcal{T}_l\}_{l=1}^{N}$
such that for all $z\in\Bn$, there is a tree $\mathcal{T}$ from the finite
collection and an $\alpha\in\mathcal{T}$ such that the dyadic tent
$\widehat{K_\alpha}:=\cup_{\beta\geq\alpha} K_\beta$ contains the tent $T_z$ and 
$\vol{t}(\widehat{K_\alpha})\simeq\vol{t}(T_z)$. 
\end{lemma}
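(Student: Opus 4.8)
The plan is to transfer the covering statement from the boundary sphere $\partial\Bn$ to the interior of the ball, using the dyadic systems of Hyt\"onen--Kairema (Lemma \ref{L:hytkai}) to build the finite family of Bergman trees. First I would recall that a Carleson tent $T_z$ is, up to Bergman-metric distortion, the ``box'' sitting over the boundary disc $D\bigl(\tfrac{z}{\abs z},\,1-\abs z\bigr)$ in the pseudo-metric $\rho(z,w)=\abs{1-\overline z w}$; indeed the defining inequality $\abs{1-\overline w\frac{z}{\abs z}}<1-\abs z$ is exactly the condition that the radial projection of $w$ lie in that disc, with the radial depth controlled by the same quantity. So the interior tent $T_z$ is essentially determined by the boundary disc $D(z/\abs z, 1-\abs z)$ together with the scale $r=1-\abs z$.

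Next I would apply Lemma \ref{L:hytkai} with a calibre $\delta$ matched to the tree parameter $\theta$ (one takes $\delta = e^{-2\theta}$, so that the generations of a Bergman tree correspond to the scales $\delta^k$): this produces finitely many dyadic systems $\{\mathcal D_l\}_{l=1}^M$ on $\partial\Bn$ such that every disc $D(z/\abs z, 1-\abs z)$ is contained in some cube $Q_i^k \in \mathcal D_l$ with $\delta^k \simeq 1-\abs z$. Then, for each boundary dyadic system $\mathcal D_l$, I would construct a Bergman tree $\mathcal T_l$ whose kubes sit over the boundary cubes of $\mathcal D_l$: concretely, one uses the construction recalled before Lemma \ref{L:tree}, choosing the point sets $E_N$ on the sphere $\sn$ and the partition $\{Q_j^N\}$ of $\sn$ so that the radial projections of the $Q_j^N$ refine into the $\ell$-th dyadic system at the corresponding scale. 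The hierarchical (nesting) property (iii) of the dyadic cubes translates into the tree order on $\mathcal T_l$, and the dyadic tent $\widehat{K_\alpha}$ over a kube $K_\alpha$ is comparable to the Carleson box over the boundary cube $Q_i^k$ that $\alpha$ lives over.

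With these trees in hand, given $z \in \Bn$ I pick, via Lemma \ref{L:hytkai}, the index $l$ and the boundary cube $Q_i^k \in \mathcal D_l$ containing $D(z/\abs z, 1-\abs z)$ with $\delta^k\simeq 1-\abs z$; I then let $\alpha \in \mathcal T_l$ be the kube center at generation $k$ lying over $Q_i^k$. Containment $T_z \subset \widehat{K_\alpha}$ follows because the boundary projection of $T_z$ lies in $Q_i^k$ and its radial depth is $\gtrsim \delta^k$, matched against the depth of $\widehat{K_\alpha}$ after adjusting constants in $\lambda, C$. For the volume comparison: by Lemma \ref{L:tree}(iii), $\vol t(\widehat{K_\alpha}) \simeq (1-\abs\alpha^2)^{n+1+t} \simeq \delta^{k(n+1+t)}$, while by \eqref{E:volstuff} and the description of $T_z$, $\vol t(T_z) \simeq (1-\abs z^2)^{n+1+t} \simeq \delta^{k(n+1+t)}$; since $\delta^k \simeq 1-\abs z$, these are comparable with constants depending only on $\delta, \lambda, C, t$ and not on $z$. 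The case $z = 0$ (where $T_0 = \Bn$) is handled separately by taking $\alpha$ to be the root of any tree.

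The main obstacle I expect is the bookkeeping that makes the boundary-to-interior transfer precise: one must verify that the Carleson tent $T_z$, which is defined by a single inequality mixing the angular and radial variables, is genuinely trapped inside the union of kubes $\widehat{K_\alpha}$ — this requires choosing the generation parameter $\theta$ (equivalently the calibre $\delta$) and the covering parameter $\lambda$ in Lemma \ref{L:hytkai} compatibly, and controlling how much the Bergman metric distorts the naive ``box over a disc'' picture near the boundary. Once the geometric dictionary between $T_z$, the boundary disc $D(z/\abs z, 1-\abs z)$, and the dyadic cube $Q_i^k$ is set up carefully, the containment and the volume estimate are both immediate from Lemma \ref{L:tree}(iii), \eqref{E:volstuff}, and Lemma \ref{L:hytkai}.
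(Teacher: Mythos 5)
Your proposal follows essentially the same route as the paper: apply Lemma \ref{L:hytkai} with calibre $\delta=e^{-2\theta}$, build a Bergman tree from each dyadic system by radially projecting its cubes onto the spheres $\mathbb{S}_{k\theta}$, and then contain the base disc $D(z/\abs{z},1-\abs{z})$ of $T_z$ in a cube of comparable scale so that the corresponding dyadic tent contains $T_z$ with comparable volume by Lemma \ref{L:tree}(iii). The ``bookkeeping'' you defer is exactly what the paper carries out, namely the estimates \eqref{E:kuberhs} and \eqref{E:kubelhs} giving the inclusions \eqref{E:inclusions}, which show the projected cubes satisfy the axioms \eqref{setsonsphere} needed for the tree construction of \cite{ArcRocSaw2006}.
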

\begin{proof}
Let $\mathcal{D}=\{Q_{i}^{k}\}_{i,k\in\mathbb{N}}$ be a dyadic system of calibre $\delta$. 
We will use this dyadic system to create a Bergman tree with parameters $\theta$ 
and $\lambda$ where $\delta=e^{-2\theta}$ and $\lambda$ will be chosen below. For 
each $k\in\mathbb{N}$, we project the sets $\{Q_{i}^{k}\}_{i\in\mathbb{Z}}$ 
radially onto the sphere $\mathbb{S}_{k\theta}$. Let $P_{k\theta}$ be the 
radial projection onto the sphere $\mathbb{S}_{k\theta}$. We will now show that 
these sets $\{P_{k\theta}Q_{i}^{k}\}_{i\in\mathbb{N}}$ satisfy the three properties 
in \eqref{setsonsphere} which means a Bergman tree can be constructed from 
them according to the construction in \cite{ArcRocSaw2006}. 

Clearly the sets $\{P_{k\theta}Q_{i}^{k}\}_{i\in\mathbb{N}}$  satisfy Properties 
(i) and (ii) in \eqref{setsonsphere}. For the third property, observe that it is 
enough to show that there are two positive constants $\lambda_1$ and $\lambda_2$ 
independent of $i,k$ that:
\begin{align}\label{E:inclusions}
\mathbb{S}_{k\theta}\cap B_\beta(r_{k\theta}z_{i}^{k},\lambda_1)
\subset P_{k\theta}Q_{i}^{k}
\subset \mathbb{S}_{k\theta}\cap B_\beta(r_{k\theta}z_{i}^{k},\lambda_2)
\end{align}
where $z_{i}^{k}$ is the centre of $Q_{i}^{k}$ and $P_{k\theta}Q_{i}^{k}$
is the projection of $Q_{i}^{k}$ onto $\mathbb{S}_{k\theta}$. Now, recall 
that $\tanh \beta(z,w) = \abs{\varphi_{z}(w)}$. Therefore, $\beta(z,w)\leq R$ 
if and only if $\abs{\varphi_{z}(w)} \leq \tanh R$ if and only if 
$1-\abs{\varphi_{z}(w)}^2 \geq 1-(\tanh{R})^2$. Now, $1-\abs{\varphi_{z}(w)}^2 = 
(1-\abs{z}^2)(1-\abs{w}^2) \abs{1-z\overline{w}}^{-2} = 
(1-r_{k\theta}^2)^2\abs{1-z\overline{w}}^{-2}$.
Thus, for $z,w\in\mathbb{S}_{k\theta}$,  $\beta(z,w)\leq R$ if and only if 
$\abs{1-z\overline{w}}\leq 
(1-r_{k\theta}^2)(1-(\tanh R)^2)^{-1/2}\simeq e^{-2k\theta}(1-(\tanh R)^2)^{-1/2}$.
Now, if $\xi\in Q_{i}^{k}$ then there holds:
\begin{align}\label{E:kuberhs}
\abs{1-P_{k\theta}\xi\overline{P_{k\theta}z_{i}^{k}}}
=\abs{1-r_{k\theta}\xi r_{k\theta}\overline{z_{i}^{k}}}
\leq \abs{1-r_{k\theta}^2}+r_{k\theta}^2\abs{1-\xi\overline{z_{i}^{k}}}
%\lesssim e^{-2k\theta} + \delta^{k}
\simeq e^{-2k\theta}.
\end{align}
On the other hand, if $\xi\in \mathbb{S}_{k\theta}\cap B_{\beta}
(P_{k\theta}z_{i}^{k},R)$, 
then $\abs{1-r_{k\theta}\frac{\xi}{\abs{\xi}}r_{k\theta}\overline{z_{i}^{k}}}
\lesssim e^{-2k\theta}(1-(\tanh R)^2)^{-1/2}$ and so there holds:
\begin{align}\label{E:kubelhs}
\abs{1-\frac{\xi}{\abs{\xi}}\overline{z_{i}^{k}}}
\leq \abs{1-r_{k\theta}\frac{\xi}{\abs{\xi}}r_{k\theta}\overline{z_{i}^{k}}}
  + \abs{\frac{\xi}{\abs{\xi}}r_{k\theta}^2\overline{z_{i}^{k}}
  - \frac{\xi}{\abs{\xi}}z_{i}^{k}}
%\leq e^{-2k\theta} + (1-r_{k\theta})
\simeq e^{-2k\theta}
=\delta^{k}.
\end{align}
Clearly, \eqref{E:kuberhs} and \eqref{E:kubelhs} together imply 
the existence of $\lambda_1$ and $\lambda_2$ such that 
\eqref{E:inclusions} is satisfied. 

Let $T_z$ be a Carleson tent and note that the ``base'' of 
$T_z$ is the disc $D(Pz,1-\abs{z})$. 
By Lemma \ref{L:hytkai}, there is a finite number of dyadic systems, 
$\{\mathcal{D}_l\}_{l=1}^{M}$, such that every disc $D$ is contained in a 
dyadic cube of comparable radius. Then this 
disc is contained in some element $Q$ of one of the dyadic systems and the 
dyadic tent over $Q$ contains the tent $T_z$. This completes the proof. 
\end{proof}

Of course, maximal functions with respect to this dyadic structure 
will play a role. Thus, for a weight, $u$, and a Bergman tree, $\mathcal{T}$, 
define the following maximal function:
\begin{align*}
M_{\mathcal{T},u}f(w)
  :=\sup_{\alpha\in\mathcal{T}}\frac{\unit_{\widehat{K_\alpha}}(w)}
  {u_t(\widehat{K_\alpha})}
  \int_{\widehat{K_\alpha}}\abs{f(z)}u(z)d\v{t}(z).
\end{align*}
The following lemma is well--known:
\begin{lemma}\label{L:maxbdd}
Let $-1 < t$ and $u$ be a weight, then $M_{\mathcal{T},u}$ is bounded on  
$L_t^p(u)$ for $1<p\leq\infty$ and is bounded from 
$L_t^1(u)\to L_{t}^{1,\infty}(u)$. 
\end{lemma}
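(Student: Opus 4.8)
The plan is to treat $M_{\mathcal{T},u}$ as a dyadic maximal operator with respect to the positive Borel measure $d\mu:=u\,d\v{t}$ and to run the classical stopping-tent argument, which does not see the particular measure at all. First I would record two structural facts about the dyadic tents $\{\widehat{K_\alpha}\}_{\alpha\in\mathcal{T}}$ inside a single Bergman tree. (a) They are nested: if $\widehat{K_\alpha}\cap\widehat{K_\beta}\neq\emptyset$, then some kube $K_\gamma$ lies in both, forcing $\gamma\geq\alpha$ and $\gamma\geq\beta$, so $\alpha$ and $\beta$ are comparable in the tree order and one tent is contained in the other. (b) Since the kubes partition $\Bn$ (Lemma~\ref{L:tree}(i)), each $w\in\Bn$ lies in a unique kube $K_{\gamma(w)}$, and the tents containing $w$ are exactly the $\widehat{K_\beta}$ with $\beta$ an ancestor of $\gamma(w)$ or $\beta=\gamma(w)$; because $\mathcal{T}$ has a root, this is a \emph{finite} totally ordered chain.

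Next I would establish the weak-type $(1,1)$ bound. Fix $\lambda>0$ and set $\Omega_\lambda:=\{w:M_{\mathcal{T},u}f(w)>\lambda\}$. For $w\in\Omega_\lambda$ there is at least one tent $\widehat{K_\beta}\ni w$ with $\avg{|f|}_{\widehat{K_\beta}}^{d\mu}>\lambda$; by (b) the tents through $w$ form a finite chain, so we may select the largest such tent $\widehat{K_{\alpha(w)}}$. If $\widehat{K_{\alpha(w)}}\neq\widehat{K_{\alpha(w')}}$ but they meet, then by (a) one contains the other, say $\widehat{K_{\alpha(w')}}\subsetneq\widehat{K_{\alpha(w)}}$; but then $\widehat{K_{\alpha(w)}}$ also contains $w'$, has average against $d\mu$ exceeding $\lambda$, and strictly contains $\widehat{K_{\alpha(w')}}$, contradicting the maximality defining $\widehat{K_{\alpha(w')}}$. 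Hence the distinct tents among $\{\widehat{K_{\alpha(w)}}:w\in\Omega_\lambda\}$ — call them $\{\widehat{K_{\alpha_j}}\}_j$ — are pairwise disjoint and cover $\Omega_\lambda$, so
\[
\mu(\Omega_\lambda)=\sum_j\mu\bigl(\widehat{K_{\alpha_j}}\bigr)
\le\frac{1}{\lambda}\sum_j\int_{\widehat{K_{\alpha_j}}}|f|\,d\mu
\le\frac{1}{\lambda}\,\norm{f}_{L_t^1(u)}.
\]
Since each average of $|f|$ against $d\mu$ is at most the $\mu$-essential supremum of $|f|$, we also have the pointwise bound $M_{\mathcal{T},u}f\le\norm{f}_{L_t^\infty(u)}$, which gives the case $p=\infty$. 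Marcinkiewicz interpolation between the weak-$(1,1)$ estimate and the $L^\infty$ estimate — valid over an arbitrary measure space — then yields boundedness of $M_{\mathcal{T},u}$ on $L_t^p(u)$ for every $1<p<\infty$.

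There is no genuine obstacle here; the only points needing attention are the nestedness (a), which is what makes the selected stopping tents pairwise disjoint, and the finiteness of the chain of tents through a point (b), which guarantees the maximal stopping tent exists with no limiting argument. Both are immediate from the tree construction and the presence of a root in $\mathcal{T}$. One could instead simply invoke the standard theory of maximal functions on a filtered measure space, but the self-contained argument above is short enough to reproduce.
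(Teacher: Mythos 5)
Your proof is correct. Note that the paper itself offers no argument for Lemma~\ref{L:maxbdd}: it is simply recorded as ``well--known,'' so there is nothing to compare against step by step; what you have written is the standard argument that justifies the claim, and it is sound. The two structural facts you isolate are exactly the right ones and both follow from the construction: since the kubes partition $\Bn$ (Lemma~\ref{L:tree}(i)), a point of $\widehat{K_\alpha}\cap\widehat{K_\beta}$ lies in a unique kube $K_\gamma$ with $\gamma\geq\alpha$ and $\gamma\geq\beta$, forcing $\alpha,\beta$ to be comparable and the tents to be nested; and the tents through a point $w$ correspond to the ancestors of the unique kube containing $w$, a finite chain because the tree is rooted at $K_1^0$. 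The maximal selected tents are then pairwise disjoint, the weak $(1,1)$ bound for the measure $d\mu=u\,d\v{t}$ follows with constant $1$, the $L^\infty$ bound is trivial, and Marcinkiewicz interpolation (valid for a general $\sigma$-finite measure, which $u\,d\v{t}$ is since $u$ is locally integrable) gives the range $1<p<\infty$; crucially, no doubling of $\mu$ is used anywhere, which is why the lemma holds for an arbitrary weight. Two cosmetic points you may wish to make explicit: adopt the convention that tents with $\mu(\widehat{K_\alpha})=0$ are skipped in the supremum (otherwise the average is $0/0$), and observe that distinct tree indices give distinct tents, so ``distinct selected tents that meet'' really does yield a strict inclusion in your maximality contradiction.
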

% \begin{proof}
% The proof is standard. First, it is clear that it is bounded 
% from $L_t^\infty(u)\to L_t^\infty(u)$. Thus, by interpolation, it suffices
% to prove the weak--type $(1,1)$ bound. That is, we need to show:
% \begin{align*}
% u_t(\{z\in\Bn:M_{\mathcal{T},u}f(z)>\lambda\})
% \lesssim \frac{1}{\lambda}\int_{\Bn}\abs{f(z)}u(z)d\v{t}(z)
% =\frac{1}{\lambda}\norm{f}_{L_t^1(u)}.
% \end{align*}
% The set above is contained in the union of the maximal 
% Carleson tents, $\widehat{K_\alpha}$, such that:
% \begin{align}\label{E:maxfunsets}
% \frac{1}{u_t(\widehat{K_\alpha})}
%   \int_{\widehat{K_\alpha}}\abs{f(z)}u(z)d\v{t}(z)
% \geq\frac{\lambda}{2}.
% \end{align}
% Since the tents, $\widehat{K_\alpha}$ are maximal, they are 
% pairwise disjoint and therefore, there holds:
% \begin{align*}
% u_t(\{z\in\Bn:M_{\mathcal{T},w}f(z)>\lambda\})
% \leq\sum_{\alpha}u_t(\widehat{K_\alpha})
% \leq \frac{2}{\lambda}\sum_{\alpha}
%   \int_{\widehat{K_\alpha}}\abs{f(z)}u(z)d\v{t}(z)
% \leq \frac{2}{\lambda}\norm{f}_{L_t^1(u)},
% \end{align*}
% where the sums above are taken over the maximal Carleson tents 
% that satisfy \eqref{E:maxfunsets}. This completes the proof. 
% \end{proof}

Finally, we make an observation. The norm inequality 
$\norm{S_{a,b}f}_{L_{b}^{p}(u)}\lesssim 
\norm{f}_{L_{b}^{p}(u)}$ is the same as the norm inequality
$\norm{Q_{a,b}f}_{L_{b}^{p}(\widetilde{u})}\lesssim 
\norm{f}_{L_{b}^{p}(u)}$ where $\widetilde{u}(z):=u(z)(1-\abs{z}^2)^{pa}$ 
and 
\begin{align*}
Q_{a,b}f(z)
:=\int_{\Bn}\frac{1}{(1-z\overline{w})^{n+1+a+b}}
  f(w)d\v{b}(w).
\end{align*}
A similar remark is true for $S_{a,b}^{+}$ and the 
similarly defined operator $Q_{a,b}^{+}$. Thus for $b>-1$ the claim in 
Theorem \ref{T:1wtsat} is equivalent to:
\begin{align*}
[u,\sigma]_{D_{p,a,b}}^{\frac{1}{2p}}
\lesssim \norm{Q_{a,b}:L_{b}^p(u)\to L_{b}^p(\widetilde{u})}
\leq \norm{Q_{a,b}^{+}:L_{b}^p(u)\to L_{b}^p(\widetilde{u})}
\lesssim [u,\sigma]_{D_{p,a,b}}^{\max\left\{1,\frac{1}{p-1}\right\}}.
\end{align*}
% Another remark that we make concerns duality. Recall that 
% $\sigma:=u^{\frac{-p'}{p}}$. It is well--known that the norm 
% inequality $\norm{Q_{a,b}^{+}f}_{L_{b}^{p}(\widetilde{u})}\lesssim 
% \norm{f}_{L_{b}^{p}(u)}$ is eqivilent to the norm inequality 
% $\norm{Q_{a,b}^{+}(\sigma f)}_{L_{b}^{p}(\widetilde{u})}\lesssim 
% \norm{f}_{L_{b}^{p}(\sigma)}$. Therefore, to prove the upper 
% in Theorem \ref{T:1wtsat}, we will prove 
% $\norm{Q_{a,b}^{+}(\sigma f)}_{L_{b}^{p}(\widetilde{u})}\lesssim 
% \norm{f}_{L_{b}^{p}(\sigma)}$.

%Note that we have:
%\begin{align*}
%\vol{t}(\widehat{K_\alpha})
%&=\sum_{\beta\geq\alpha}\vol{t}(K_\beta)
%% \\&=\sum_{N=0}^{+\infty}\sum_{d(\beta)=d(\alpha)+N)}
%%   \vol{t}(K_\beta)
%\\&\simeq\sum_{N=0}^{+\infty}\sum_{d(\beta)=d(\alpha)+N}
%  e^{-2(d(\alpha)+N)\theta(n+1+t)}
%\\&\leq e^{2d(\alpha)(n+1+t)}
%  \sum_{N=0}^{+\infty}e^{2N\theta (1-t)}
%\\&\simeq \vol{t}(K_\alpha) \frac{e^{\theta 2(t-1)}}{e^{\theta 2(t-1)}-1}.
%\end{align*}
%Therefore, we have show that:
%\begin{align*}
%\vol{t}(K_\alpha) \gtrsim 
%\frac{e^{\theta 2(t-1)}-1}{e^{\theta 2(t-1)}}\vol{t}(\widehat{K_\alpha}).
%\end{align*}
%Thus, by adjusting $\theta$ we can adjust the proportion of volume 
%assigned to the top cube of a tent. This fact is relevant for the 
%two--weight considerations in Section 5. 

%%%%%%%%%%%%%%%%%%%%%%%%%%%%%% SECTION  SECTION SECTION
%%%%%%%%%%%%%%%%%%%%%%%%%%%%%% SECTION  SECTION SECTION 
\section{Equivalence to a Dyadic Operator}\label{dom}
In this section, we will show that when $b+1>0$, $Q_{a,b}^{+}$ is 
pointwise equivalent to a finite sum of simple operators of the form:
\begin{align}\label{E:sparse}
T_{\mathcal{T}}f
:=\sum_{\alpha\in\mathcal{T}}
  \vol{b}\left(\widehat{K_\alpha}\right)^{\frac{-a}{n+1+b}}
  \avg{f}_{\widehat{K_\alpha}}^{d\v{b}}
  \unit_{\widehat{K_\alpha}}
\end{align}
where $\mathcal{T}$ is a Bergman tree. 
%We would like to point out these values
% for $\theta$ and $\lambda$ are not important. Indeed, it will be
% clear that following is true for any positive values of $\theta$ and 
% $\lambda$ -- of course, the implied constants depend on these values
% of $\theta$ and $\lambda$. For one--weight considerations, any values 
% of $\theta$ and $\lambda$ will work. However, things are more subtle 
% in the two--weight setting -- as will be discussed in Section 5 --
% and it is sometimes neccessary to choose values of $\theta$ based 
% on certain properties of the weights. 

We first show that $Q_{a,b}^{+}$ is dominated by a finite sum of operators 
of the desired form. Assume  
that for every $z,w\in\Bn$, there is a Carleson tent, $T$, containing
$z$ and $w$ such that $\vol{b} (T) \simeq \abs{1-\overline{z}w}^{n+1+b}$.
Then we can use Lemma \ref{L:covering} to deduce that
\begin{align*}
Q_{a,b}^{+}f(z)
\lesssim \sum_{l=1}^{M}\sum_{\alpha\in\mathcal{T}_l}
  \vol{b}\left(\widehat{K_\alpha}\right)^{\frac{-a}{n+1+b}}
  \avg{f}_{\widehat{K_\alpha}}^{d\v{b}}
  \unit_{\widehat{K_\alpha}}(z).
\end{align*}

Therefore, we only need to show that for every $z,w\in\Bn$, there is a 
Carleson tent $T$ containing $z,w$ such that $\vol{b} (T) \simeq 
\abs{1-\overline{z}w}^{n+1+b}$. We now turn to that.

Note that there is a $N\in {-1,0,1,2,\ldots}$ such that 
$\abs{1-\overline{z}w}\simeq e^{-2N\theta}$ (that is, 
$e^{-2(N+1)\theta}\leq\abs{1-\overline{z}w}<e^{-2N\theta}$). We will show
that there is a $k\in\mathbb{N}$ -- that does not depend on 
$z$ or $w$ -- such that $z,w\in T_{P_{(N-k)\theta}w}$.
Since $\vol{b} (T_{P_{(N-k)\theta}w})\simeq e^{-2(N-k)\theta(n+1+b)}
\simeq e^{-2N\theta(n+1+b)}\simeq \abs{1-\overline{z}w}^{n+1+b}$, 
this will prove the claim.

We first show that $w\in T_{P_{(N-k)\theta}w}$. Indeed, we will
show that $w\in T_{P_{N\theta}w}$. Since $w$ and $P_{N\theta}w$ 
are on the same ray, we need to show that $\abs{P_Nw}\leq\abs{w}$.
But this is not difficult:  
\begin{align*}
\abs{P_{N\theta}w}
=\frac{e^{2N\theta}-1}{e^{2N\theta}+1}
\leq 1-e^{-2N\theta}
\leq 1-\abs{1-\overline{z}w}
\leq 1- \abs{1-\abs{zw}}
=\abs{zw}
\leq \abs{w}.
\end{align*}

We next show that $z\in T_{P_{(N-k)\theta}w}$. To do this, it is enough to show 
that $\abs{1-\overline{z}Pw}\lesssim e^{-2N\theta}\simeq 1-\abs{P_{N\theta}w}$. (The $k$ will
essentially be the logarithm of the implied constant, but since the exact
value is not important, we do not attempt to calculate it.) First, note that 
$\abs{P_{N\theta}w}\leq\abs{w}$ and so there holds:
\begin{align*}
\abs{Pw-w} 
\leq\abs{Pw-P_{N\theta}w}
=\abs{\frac{w}{\abs{w}}-r_{N\theta}\frac{w}{\abs{w}}}
=1-r_{N\theta}
\simeq e^{-2N\theta}.
\end{align*}
And we then have: 
\begin{align*}
\abs{1-\overline{z}Pw}
\leq \abs{1-\overline{z}w} + \abs{\overline{z}Pw-\overline{z}w}
\lesssim e^{-2N\theta} + \abs{Pw-w}
\lesssim e^{-2N\theta}.
\end{align*}
Therefore, we have shown that $Q_{a,b}^{+}$ is dominated 
by a finite sum of operators of the form $T_\mathcal{T}$. 

We now show that $Q_{a,b}^{+}$ dominates every dyadic operator 
as we have defined above. That is, for every Bergman tree 
$\mathcal{T}$ we will show that for all $z\in\Bn$ there 
holds $\abs{T_{\mathcal{T}}f(z)}\lesssim Q_{a,b}^{+}\abs{f}(z)$.
The proof here is similar to the one in, for example, 
\cite{Cru2015} for the fractional integral operator. 

We first make some computations and fix some notation. 
First, we may assume that $f$ is non--negative. For 
$z\in\Bn$, let $\alpha=\alpha(z)$ be the unique element of $\mathcal{T}$
such that $z\in K_\alpha$. For $\beta\in\mathcal{T}$ with 
$\beta\geq\alpha$, let 
$s(\alpha,\beta)$ denote the unique element of $\Bn$ that 
satisfies $\beta \leq s(\alpha,\beta) \leq \alpha$ and 
$d(s(\alpha,\beta))=d(\beta)+1$. That is, $s(\alpha,\beta)$ is
the child of $\beta$ that is ``in--between'' $\beta$ and $\alpha$.
Let $E_{\alpha,\beta}:= \widehat{K_\beta}
\setminus\widehat{K_{s(\alpha,\beta)}}$ and observe that
for fixed $\alpha$, these sets are pairwise disjoint. 
Note that $\vol{b}(\widehat{K_\beta})\simeq e^{2\theta(n+1+b)}
\vol{b}(\widehat{K_{s(\alpha,\beta)}})$. Also, note that 
for $z,w\in\widehat{K_\beta}$, there holds 
$\abs{1-z\overline{w}}\lesssim 1-\abs{\beta}^2$. This can be 
seen by, for example, noting that $\abs{1-z\overline{w}}
\leq\abs{1-P\beta\overline{w}}+\abs{P\beta-z}$; since 
$z,w\in\widehat{K_\beta}$, both of these terms are dominated 
by $1-\abs{\beta}<1-\abs{\beta}^2$. Thus, for fixed 
$\alpha\in\mathcal{T}$ and $z\in K_\alpha$, there holds:
\begin{align}\label{E:compforint}
\sum_{\beta\in\mathcal{T}:\beta\leq\alpha}
  \int_{E_{\alpha,\beta}}
  \frac{f(w)d\v{b}(w)}
  {\vol{b}(\widehat{K_\beta})^{1+\frac{a}{n+1+b}}}
\lesssim \sum_{\beta\in\mathcal{T}:\beta\leq\alpha}
  \int_{E_{\alpha,\beta}}\frac{f(w)d\v{b}(w)}{\abs{1-z\overline{w}}^{n+1+a+b}}
\leq Q_{a,b}^{+}f(z).
\end{align}
Also, there holds (again for fixed $\alpha$ and $z\in K_\alpha$):
\begin{align}\label{E:compforsum}
\sum_{\beta\in\mathcal{T}:\beta\leq\alpha}
  \left(\vol{b}(\widehat{K_\beta})\right)^{-\frac{a}{n+1+b}-1}
  \int_{\widehat{K_{s(\alpha,\beta)}}}f(w)d\v{b}(w)
\end{align}
is controlled by
\begin{align}
e^{-2\theta(n+1+a+b)}\sum_{\beta\in\mathcal{T}:\beta\leq\alpha}
  \left(\vol{b}(\widehat{K_{s(\alpha,\beta)}})\right)^{-\frac{a}{n+1+b}-1}
  \int_{\widehat{K_{s(\alpha,\beta)}}}f(w)d\v{b}(w).
\end{align}
But this is just:
\begin{align*}
e^{-2\theta(n+1+a+b)}\sum_{\alpha\in\mathcal{T}}
  \vol{b}\left(\widehat{K_\alpha}\right)^{\frac{-a}{n+1+b}}
  \avg{f}_{\widehat{K_\alpha}}^{d\v{b}}
  \unit_{\widehat{K_\alpha}}(z)
=e^{-2\theta(n+1+a+b)}T_{\mathcal{T}}f(z).
\end{align*}
Therefore, for fixed $z\in\Bn$ and $\alpha=\alpha(z)$
\begin{align*}
T_{\mathcal{T}}f(z)
=\sum_{\gamma\in\mathcal{T}}
  \vol{b}\left(\widehat{K_\gamma}\right)^{\frac{-a}{n+1+b}}
  \avg{f}_{\widehat{K_\gamma}}^{d\v{b}}
  \unit_{\widehat{K_\gamma}}(z)
\end{align*}
is equal to
\begin{align*}
\sum_{\beta\in\mathcal{T}:\beta\leq\alpha}
  \int_{E_{\alpha,\beta}}
  \frac{f(w)d\v{b}(w)}
  {\vol{b}(\widehat{K_\beta})^{1+\frac{a}{n+1+b}}}
+ \sum_{\beta\in\mathcal{T}:\beta\leq\alpha}
  \left(\vol{b}(\widehat{K_\beta})\right)^{-\frac{a}{n+1+b}-1}
  \int_{\widehat{K_{s(\alpha,\beta)}}}f(w)d\v{b}(w).
\end{align*}
Therefore, by the above, we have:
\begin{align*}
T_{\mathcal{T}}f(z)
\leq CQ_{a,b}^{+}f(z) 
  +e^{-2\theta(n+1+a+b)}T_{\mathcal{T}}f(z).
\end{align*}
Since $n+1+a+b>0$, rearranging the above completes the proof. 

Thus, we have proven the following lemma:
\begin{lemma}\label{L:pwequiv}
There is a finite collection of Bergman trees, 
$\{\mathcal{T}_l\}_{l=1}^{M}$ such that for $b>-1$ there holds:
\begin{align*}
Q_{a,b}^{+}f(z)
\simeq\sum_{l=1}^{M}\sum_{\alpha\in\mathcal{T}_l}
  \vol{b}\left(\widehat{K_\alpha}\right)^{\frac{-a}{n+1+b}}
  \avg{f}_{\widehat{K_\alpha}}^{d\v{b}}
  \unit_{\widehat{K_\alpha}}(z).
\end{align*}
\end{lemma}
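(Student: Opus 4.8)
The statement to be proven, Lemma~\ref{L:pwequiv}, is precisely the two-sided pointwise equivalence between $Q_{a,b}^{+}$ and a finite sum of the dyadic operators $T_{\mathcal{T}_l}$. In fact the excerpt preceding the lemma statement already carries out both halves of the argument, so the proof is essentially a matter of collecting the two inequalities that have been established. The plan is therefore to simply invoke what has been done, in the right order, and observe that together they give the claimed $\simeq$.

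First I would record the upper bound $T_{\mathcal{T}}f(z)\lesssim Q_{a,b}^{+}f(z)$ for each fixed Bergman tree $\mathcal{T}$. This is the content of the decomposition of $T_{\mathcal{T}}f(z)$ (for $z\in K_\alpha$) into the "off-diagonal'' piece governed by the sets $E_{\alpha,\beta}$ and the "tail'' piece $\sum_\beta (\vol{b}(\widehat{K_\beta}))^{-\frac{a}{n+1+b}-1}\int_{\widehat{K_{s(\alpha,\beta)}}}f\,d\v{b}$. By \eqref{E:compforint} the first piece is bounded by $Q_{a,b}^{+}f(z)$, and by \eqref{E:compforsum} the second is bounded by $e^{-2\theta(n+1+a+b)}T_{\mathcal{T}}f(z)$. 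Since $n+1+a+b>0$ (which follows from $-a<b+1$, giving $n+1+a+b>n>0$), the factor $e^{-2\theta(n+1+a+b)}$ is strictly less than $1$, so absorbing it yields $T_{\mathcal{T}}f(z)\le C\,Q_{a,b}^{+}f(z)$ with $C$ independent of $f$ and $z$; this gives the direction $\sum_{l}T_{\mathcal{T}_l}f\lesssim Q_{a,b}^{+}f$ after summing over the finitely many trees.

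Next I would record the lower bound, i.e. that $Q_{a,b}^{+}$ is dominated by a finite sum of operators $T_{\mathcal{T}_l}$. For this one reduces to the geometric claim that for every $z,w\in\Bn$ there is a Carleson tent $T$ with $z,w\in T$ and $\vol{b}(T)\simeq\abs{1-\overline{z}w}^{n+1+b}$; given this, Lemma~\ref{L:covering} replaces each such $T$ by a dyadic tent $\widehat{K_\alpha}$ from one of finitely many trees $\{\mathcal{T}_l\}_{l=1}^{M}$ with comparable $\vol{b}$, and then on the kube containing $z$ one has $\abs{1-z\overline{w}}^{-(n+1+a+b)}\lesssim \vol{b}(\widehat{K_\alpha})^{-\frac{a}{n+1+b}}\cdot\vol{b}(\widehat{K_\alpha})^{-1}$, which integrated against $f\,d\v{b}$ over $\widehat{K_\alpha}$ is a single term of $T_{\mathcal{T}_l}f(z)$. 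The geometric claim itself is verified by choosing $N$ with $\abs{1-\overline{z}w}\simeq e^{-2N\theta}$ and checking, as in the excerpt, that both $z$ and $w$ lie in $T_{P_{(N-k)\theta}w}$ for a universal $k$: the containment of $w$ is the elementary estimate $\abs{P_{N\theta}w}\le\abs{w}$, and the containment of $z$ follows from $\abs{1-\overline{z}Pw}\le\abs{1-\overline{z}w}+\abs{\overline{z}}\abs{Pw-w}$ together with $\abs{Pw-w}\le 1-r_{N\theta}\simeq e^{-2N\theta}$. Combining the two bounds gives the pointwise equivalence, completing the proof.

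The only step with any real content is the geometric claim feeding the lower bound — everything else is bookkeeping — and even there the main subtlety is merely that one must pass to the radial projection $P_{N\theta}w$ of $w$ (not $w$ itself) onto the correct Bergman sphere before taking the Carleson tent, and then verify both points sit inside it with volume control; once the triangle-type inequalities for the pseudometric $\rho(z,w)=\abs{1-\overline{z}w}$ on $\partial\Bn$ are set up, this is routine. I would present the proof as: "By the discussion preceding the statement, $Q_{a,b}^{+}f\lesssim\sum_l T_{\mathcal{T}_l}f$ and each $T_{\mathcal{T}_l}f\lesssim Q_{a,b}^{+}f$; combining these gives the claim,'' expanding only the geometric lemma if the referee wants it spelled out separately.
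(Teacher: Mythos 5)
Your proposal is correct and is essentially the paper's own argument: you obtain $T_{\mathcal{T}}f\lesssim Q_{a,b}^{+}f$ by the same splitting over the sets $E_{\alpha,\beta}$ and absorption of the $e^{-2\theta(n+1+a+b)}T_{\mathcal{T}}f$ term (legitimate since $n+1+a+b>0$), and the reverse domination by the same geometric claim that any $z,w$ lie in a Carleson tent of $\vol{b}$-volume $\simeq\abs{1-\overline{z}w}^{n+1+b}$, upgraded to a dyadic tent via Lemma~\ref{L:covering}. No changes are needed.
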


%%%%%%%%%%%%%%%%%%%%%%%%%%%%%% SECTION  SECTION SECTION
%%%%%%%%%%%%%%%%%%%%%%%%%%%%%% SECTION  SECTION SECTION 
\section{Proof of Theorem \ref{T:1wtsat}}\label{proofs}
This section is devoted to the proof of Theorem \ref{T:1wtsat}.
The proof requires that either $b+1>0$ or $a+1>0$. Since 
$-a<b+1$ then this holds. The proofs given in this section 
are for the case $b+1>0$ and the case $b+1\leq0$ and $a>0$ 
is obtained by the following argument using duality. 

For a weight, $\omega$, the dual space of $L_{0}^{p}(\omega)$ under the 
unweighted inner product on $L_{0}^{2}$, is $L_{0}^{p'}(\omega^{\frac{-p'}{p}})$. 
It is also easy to see that the $L_{0}^{2}$ adjoint of $S_{a,b}$ is 
$S_{b,a}$. Let $\rho(z)=u(z)(1-\abs{z}^2)^{b}$ and let 
$\psi(z)=u(z)^{\frac{-p'}{p}}(1-\abs{z}^2)^{\frac{-1}{p}(p'b+pa)}$.
Note that $\psi(z)(1-\abs{z}^2)^{a}
=u(z)^{\frac{-p'}{p}}(1-\abs{z}^{2})^{\frac{-p'b}{p}}$ is the dual 
weight of $\rho(z)$. There holds:
\begin{align*}
\norm{S_{a,b}:L_{b}^{p}(u)\to L_{b}^{p}(u)}
=\norm{S_{a,b}:L_{0}^{p}(\rho)\to L_{0}^{p}(\rho)}
=\norm{S_{b,a}:L_{a}^{p'}(\psi)\to L_{a}^{p'}(\psi)},
\end{align*}
and a similar statement holds for $S_{a,b}^{+}$ and 
$S_{b,a}^{+}$. 
Letting $\nu(z)=\psi(z)^{\frac{-p}{p'}}$ we may use the results of 
this section to deduce:
\begin{align}\label{E:dualwt}
[\psi,\nu]_{D_{p',b,a}}^{\frac{1}{2p'}}
&\lesssim \norm{S_{a,b}:L_{b}^{p}(u)\to L_{b}^{p}(u)}
\\&\leq \norm{S_{a,b}^{+}:L_{b}^{p}(u)\to L_{b}^{p}(u)}
\lesssim [\psi,\nu]_{D_{p',b,a}}^{\max\left\{1,\frac{1}{p'-1}\right\}}, 
\end{align}
and this is exactly what is claimed in Theorem \ref{T:1wtsat} for the 
case $b>-1$.

% We will show that $[\psi,\nu]_{D_{p',b,a}}=[u,\sigma]_{D_{p,a,b}}^{p'-1}$.
% Inserting this into \eqref{E:dualwt} will complete the proof of Theorem
% \ref{T:1wtsat} for $b+1<0$, given the results of this section. To prove this 
% claim, we will use the first definition in \eqref{E:dpdef}. That is, we 
% will show:
% \begin{align*}
% \left(\avg{\nu}_{T_z}^{d\v{b}}\right)^{p'-1}
% \avg{\psi}_{T_z}^{d\v{p'b+a}}
% \simeq \left(\left(\avg{\sigma}_{T_z}^{d\v{b}}\right)^{p-1}
%   \avg{u}_{T_z}^{d\v{pa+b}}\right)^{p'-1}.
% \end{align*}
% 
% 
% It is easy to see that $\nu(z)=u(z)(1-\abs{z}^2)^{pa+b-a}$ and so 
% $\int_{T_z}\nu d\v{a}=\int_{T_z}ud\v{pa+b}$. Therefore,
% by multiplying and dividing by $\vol{pa+b}(T_z)\simeq 
% (1-\abs{z}^2)^{n+1+pa+b}$, we see that 
% $\avg{\nu}_{T_z}^{d\v{a}}\simeq\avg{u}_{T_z}^{d\v{pa+b}}
% (1-\abs{z}^2)^{b+a(p-1)}$. Similarly, $\int_{T_z}\psi d\v{p'b+a}
% =\int_{T_z}\sigma d\v{b}$ and therefore 
% by multiplying and dividing by $(1-\abs{z}^2)^{n+1+b}$ there holds
% $\avg{\psi}_{T_z}^{d\v{p'b+a}}\simeq \avg{\sigma}_{T_z}^{d\v{b}}
% (1-\abs{z}^2)^{b-p'b-a}$. Using the fact that 
% $(1-\abs{z}^2)^{(b+a(p-1))(p'-1)}(1-\abs{z}^2)^{b-p'b-a}=1$
% there holds:
% \begin{align*}
% \left(\avg{\nu}_{T_z}^{d\v{b}}\right)^{p'-1}
% \avg{\psi}_{T_z}^{d\v{p'b+a}}
% \simeq \left(\avg{u}_{T_z}^{d\v{pa+b}}\right)^{p'-1}
%   \avg{\sigma}_{T_z}^{d\v{b}}
% = \left(\left(\avg{\sigma}_{T_z}^{d\v{b}}\right)^{p-1}
%   \avg{u}_{T_z}^{d\v{pa+b}}\right)^{p'-1}.
% \end{align*}
% This, completes the proof of the fact that $[\psi,\nu]_{D_{p',b,a}}
% =[u,\sigma]_{D_{p,a,b}}^{p'-1}$.

Recall that 
$\widetilde{u}(z)=u(z)\left(1-\abs{z}^2\right)^{pa}$ and 
$\sigma(z)=u(z)^{\frac{-p'}{p}}$. We will use the following 
fact:
\begin{align*}
\norm{S_{a,b}:L_{b}^{p}(u)\to L_{b}^{p}(u)}
=\norm{Q_{a,b}: L_{b}^{p}(u)\to L_{b}^{p}(\widetilde{u})}
=\norm{Q_{a,b}(\sigma \cdot):L_{b}^{p}(\sigma)\to L_{b}^{p}(\widetilde{u})}.
\end{align*}  
A similar statement of course also holds for $Q_{a,b}^{+}$ and 
$S_{a,b}^{+}$. 

\subsection{Proof of Lower Bound in Theorem \ref{T:1wtsat} when $b+1>0$}
In this subsection we prove the lower bound in Theorem \ref{T:1wtsat} 
under the assumption that $b+1>0$. That is, we will show
\begin{align*}
\mathcal{A}
:=\norm{Q_{a,b}(\sigma \cdot):L_{b}^{p}(\sigma)\to L_{b}^{p}(\widetilde{u})}
<\infty
\hspace{.2in}
\Rightarrow
\hspace{.2in}
[u,\sigma]_{D_{p,a,b}}\lesssim \mathcal{A}^{2p}.
\end{align*}
% \begin{align*}
% [u,\sigma]_{D_{p,a,b}}^{\frac{1}{2p}}
% \lesssim \mathcal{A}:=
%   \norm{Q_{a,b}(\sigma \cdot):
%   L_{b}^{p}(\sigma)\to L_{b}^{p}(\widetilde{u})}
% \end{align*}

We first give a familiar property of weights.
\begin{lemma}\label{L:bddbelow}
There holds:
\begin{align*}
\avg{\widetilde{u}}_{\widehat{K_\alpha}}^{d\v{b}}
  \left(\avg{\sigma}_{\widehat{K_\alpha}}^{d\v{b}}\right)^{p-1}
  \vol{b}\left(\widehat{K_\alpha}\right)^{\frac{-pa}{n+1+b}}
\gtrsim 1.
\end{align*}
\end{lemma}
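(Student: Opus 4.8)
The plan is to pass from the dyadic tent $\widehat{K_\alpha}$ to the single kube $K_\alpha$, on which the factor $(1-\abs{z}^2)^{pa}$ is essentially constant, and then invoke H\"older's inequality. The reason one cannot simply run H\"older directly on $\widehat{K_\alpha}$ is that $(1-\abs{z}^2)^{pa}$ is far from comparable to $(1-\abs{\alpha}^2)^{pa}$ on the whole tent, so the volume factor $\vol{b}(\widehat{K_\alpha})^{-pa/(n+1+b)}$ cannot be absorbed there; restricting to $K_\alpha$ loses nothing because the relevant averages change only by bounded factors.

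First I would record the pointwise identity. Since $\sigma=u^{-p'/p}$ and $p'(p-1)=p$, we have $u^{1/p}\sigma^{1/p'}=1$ almost everywhere. Applying H\"older's inequality with exponents $p$ and $p'$ on $K_\alpha$ equipped with the probability measure $\vol{b}(K_\alpha)^{-1}d\v{b}$ gives
\begin{align*}
1=\avg{u^{1/p}\sigma^{1/p'}}_{K_\alpha}^{d\v{b}}
\leq\left(\avg{u}_{K_\alpha}^{d\v{b}}\right)^{1/p}\left(\avg{\sigma}_{K_\alpha}^{d\v{b}}\right)^{1/p'},
\end{align*}
so that $\avg{u}_{K_\alpha}^{d\v{b}}\left(\avg{\sigma}_{K_\alpha}^{d\v{b}}\right)^{p-1}\geq1$.

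Next I would compare the averages over $K_\alpha$ with those over $\widehat{K_\alpha}$. Since $K_\alpha\subset\widehat{K_\alpha}$, since $\widetilde{u}$ and $\sigma$ are nonnegative, and since $\vol{b}(\widehat{K_\alpha})\simeq\vol{b}(K_\alpha)$ by Lemma \ref{L:tree}(ii), one has $\avg{\widetilde{u}}_{\widehat{K_\alpha}}^{d\v{b}}\gtrsim\avg{\widetilde{u}}_{K_\alpha}^{d\v{b}}$ and $\avg{\sigma}_{\widehat{K_\alpha}}^{d\v{b}}\gtrsim\avg{\sigma}_{K_\alpha}^{d\v{b}}$. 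Moreover, by Lemma \ref{L:tree}(i) together with \eqref{E:volstuff}, there holds $1-\abs{z}^2\simeq1-\abs{\alpha}^2$ uniformly for $z\in K_\alpha$; since $\widetilde{u}(z)=u(z)(1-\abs{z}^2)^{pa}$ this yields $\avg{\widetilde{u}}_{K_\alpha}^{d\v{b}}\simeq(1-\abs{\alpha}^2)^{pa}\avg{u}_{K_\alpha}^{d\v{b}}$.

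Finally I would assemble these estimates with $\vol{b}(\widehat{K_\alpha})^{-pa/(n+1+b)}\simeq(1-\abs{\alpha}^2)^{-pa}$, which is Lemma \ref{L:tree}(iii). The two powers of $1-\abs{\alpha}^2$ cancel and one is left with
\begin{align*}
\avg{\widetilde{u}}_{\widehat{K_\alpha}}^{d\v{b}}\left(\avg{\sigma}_{\widehat{K_\alpha}}^{d\v{b}}\right)^{p-1}\vol{b}\left(\widehat{K_\alpha}\right)^{\frac{-pa}{n+1+b}}
\gtrsim\avg{u}_{K_\alpha}^{d\v{b}}\left(\avg{\sigma}_{K_\alpha}^{d\v{b}}\right)^{p-1}\geq1,
\end{align*}
as desired. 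The only real subtlety — and the step I would be most careful about — is that the comparison $1-\abs{z}^2\simeq1-\abs{\alpha}^2$ must hold with constants independent of $\alpha$; this is exactly guaranteed by the uniform two-sided inclusion $B_{\beta}(\alpha,C_1)\subset K_\alpha\subset B_{\beta}(\alpha,C_2)$ in Lemma \ref{L:tree}(i).
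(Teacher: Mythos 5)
Your proof is correct, but it takes a genuinely different route from the paper's. The paper runs H\"older directly on the tent $\widehat{K_\alpha}$, pairing $\widetilde{u}^{1/p}$ against $\sigma^{1/p'}$ (note the pointwise identity $\widetilde{u}(z)^{1/p}\sigma(z)^{1/p'}=(1-\abs{z}^2)^{a}$), and then invoking the integral estimate
\begin{align}
\int_{\widehat{K_\alpha}}(1-\abs{z}^2)^{a}\,d\v{b}(z)\simeq(1-\abs{\alpha}^2)^{n+1+a+b}\simeq\vol{b}(\widehat{K_\alpha})^{1+\frac{a}{n+1+b}},
\end{align}
which holds because $a+b>-1$ in the regime $b+1>0$, $-a<b+1$. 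Your proof instead uses the pairing $u^{1/p}\sigma^{1/p'}=1$, which does require restricting to the single kube $K_\alpha$ to exploit pointwise comparability of $1-\abs{z}^2$ with $1-\abs{\alpha}^2$, and then passes back to $\widehat{K_\alpha}$ via $\vol{b}(K_\alpha)\simeq\vol{b}(\widehat{K_\alpha})$ and monotonicity of the integrals. Your introductory remark that one cannot run H\"older directly on the tent is accurate for your pairing $u$-versus-$\sigma$, but it does not apply to the pairing $\widetilde{u}$-versus-$\sigma$, which is what the paper exploits; the paper's argument is thus a little shorter, while yours trades the tent-level integral estimate for a pointwise estimate on a Bergman ball, which some readers may find more transparent. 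Either way the conclusion and the constants are the same.
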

\begin{proof}
Recall that $\widetilde{u}(z)=u(z)(1-\abs{z}^2)^{pa}$ and 
$\widetilde{u}_{b}(\widehat{K_\alpha})=\int_{\widehat{K_\alpha}}
\widetilde{u}(z)d\v{b}(z)$.
To prove the claim, we will prove the equivalent inequality:
\begin{align*}
\vol{b}(\widehat{K_\alpha})
  \vol{b}(\widehat{K_\alpha})^{\frac{a}{n+1+b}}
\lesssim \widetilde{u}_{b}(\widehat{K_\alpha})^{\frac{1}{p}}
  \left(\sigma_{b}(\widehat{K_\alpha})\right)^{\frac{1}{p'}}.
\end{align*}
Indeed, there holds:
\begin{align*}
\vol{b}(\widehat{K_\alpha})
  \vol{b}(\widehat{K_\alpha})^{\frac{a}{n+1+b}}
\simeq (1-\abs{\alpha}^2)^{n+1+b+a}
\simeq \int_{\widehat{K_\alpha}}(1-\abs{z}^{2})^{a}d\v{b}(z).
\end{align*}
So by H\"{o}lder's Inequality we have:
\begin{align*}
\vol{b}(\widehat{K_\alpha})
  \vol{b}(\widehat{K_\alpha})^{\frac{a}{n+1+b}}
\simeq \int_{\widehat{K_\alpha}}\sigma(z)^{\frac{1}{p'}}
  \widetilde{u}(z)^{\frac{1}{p}}d\v{b}(z)
\lesssim \widetilde{u}_{b}(\widehat{K_\alpha})^{\frac{1}{p}}
  \left(\sigma_{b}(\widehat{K_\alpha})\right)^{\frac{1}{p'}}.
\end{align*}
\end{proof}

If $\mathcal{A}<\infty$, then in particular the following weak--type 
inequality holds:
\begin{align*}
\widetilde{u}_{b}\left(\left\{w\in\Bn:\abs{Q_{a,b}(\sigma f)(w)}>
  \lambda\right\}\right)
\lesssim \frac{\mathcal{A}^p}{\lambda^{p}}
  \int_{\Bn}\abs{f(z)}^p\sigma(z) d\v{b}(z).
\end{align*}
Since $n+1+a+b>0$, 
by \cite{Bek1981}*{Lemma 5} there is an $N=N(n,a,b)>0$ 
so that if $\alpha\in\mathcal{T}$ and $d(\alpha)>N$, then there is a 
$\beta\in\mathcal{T}$ with $d(\beta)=d(\alpha)$ such that for 
all $z\in\widehat{K_\beta}$ there holds:
$
\abs{Q_{a,b}(\sigma\unit_{\widehat{K_\alpha}})(z)}
% \geq \frac{1}{\abs{1-\overline{z}\alpha}^{n+1+a+b}}
%    \int_{\widehat{K_\alpha}}\sigma(w)dv_{b}(w)
\gtrsim \avg{\sigma}_{\widehat{K_\alpha}}^{d\v{b}}
  \vol{b}\left(\widehat{K_\alpha}\right)^{\frac{-a}{n+1+b}}.
$
Therefore, 
\begin{align*}\widehat{K_\beta}\subset 
\left\{w\in\Bn:\abs{Q_{a,b}(\sigma \unit_{\widehat{K_\alpha}})(w)}\gtrsim
\avg{\sigma}_{\widehat{K_\alpha}}^{d\v{b}}
  \vol{b}\left(\widehat{K_\alpha}\right)^{\frac{-a}{n+1+b}}\right\}.
\end{align*}
By the weak--type inequality, there holds:
\begin{align*}
\widetilde{u}_{b}(\widehat{K_\beta})
\leq \mathcal{A}^{p} 
  \frac{\vol{b}\left(\widehat{K_\alpha}\right)^{\frac{pa}{n+1+b}}
  \vol{b}\left(\widehat{K_\alpha}\right)^{p}}{\sigma_{b}(\widehat{K_{\alpha}})^{p}}
  \int_{\widehat{K_\alpha}}\sigma(z)d\v{b}(z).
\end{align*}
Rearranging this we find:
\begin{align*}
\avg{\widetilde{u}}_{\widehat{K_\beta}}^{d\v{b}}
\left(\avg{\sigma}_{\widehat{K_\alpha}}^{d\v{b}}\right)^{p-1}
\vol{b}\left(\widehat{K_\alpha}\right)^{\frac{-pa}{n+1+b}}
\lesssim \mathcal{A}^{p},
\end{align*}
and interchanging the roles of $\alpha$ and $\beta$ yields:
\begin{align*}
\avg{\widetilde{u}}_{\widehat{K_\alpha}}^{d\v{b}}
\left(\avg{\sigma}_{\widehat{K_\beta}}^{d\v{b}}\right)^{p-1}
\vol{b}\left(\widehat{K_\beta}\right)^{\frac{-pa}{n+1+b}}
\lesssim \mathcal{A}^{p}.
\end{align*}
Thus, using Lemma \ref{L:bddbelow} there holds:
\begin{align}\label{E:smallball}
\sup_{\alpha\in\mathcal{T}:d(\alpha)>N}
\avg{\widetilde{u}}_{\widehat{K_\alpha}}^{d\v{b}}
\left(\avg{\sigma}_{\widehat{K_\alpha}}^{d\v{b}}\right)^{p-1}
\vol{b}\left(\widehat{K_\alpha}\right)^{\frac{-pa}{n+1+b}}
\lesssim \mathcal{A}^{2p}.
\end{align}

This proves the lower bound in Theorem \ref{T:1wtsat} when the supremum 
is taken over small tents. We now show that it holds when the supremum 
is taken over big tents.
Define: $$f(w)=\unit_{B_{\beta}(0,N)}(w)\left(1-\abs{w}^2\right)^{-b}$$ and 
note that $f$ is bounded.
Then since $(1-\overline{z}w)^{-1}$ is analytic as a function of $w$ and has 
no zeros, it follows that $Q_{a,b}f(z) = C_N$.
% \begin{align*}
% Q_{a,b}f(z)
% =\int_{B_\beta(0,N)}\frac{1}{(1-z\overline{w})^{n+1+a+b}}dv(w)
% =C_{N}.
% \end{align*}
Using again the weak--type inequality this implies:
\begin{align*}
\widetilde{u}_{b}(\Bn)
=\widetilde{u}_{b}\left(\left\{w\in\Bn:\abs{Q_{a,b}f(w)}>\frac{C_N}{2}\right\}\right)
%\\&\simeq\int_{\Bn}\abs{Q_{a,b}f(z)}^{p}\widetilde{u}(z)d\v{b}(z)
\leq \frac{2^p\mathcal{A}^{p}}{C_N^p}
  \int_{B_{\beta}(0,N)}\abs{f(z)}^pd\v{b}(z)
\simeq \mathcal{A}^{p}.
\end{align*}
On the other hand, if $Q_{a,b}$ is well--defined for $f\in L_{b}^{p}(u)$, 
then $\sigma(\Bn)<\infty$ by \cite{Bek1981}*{Lemma 4}. Therefore, 
by \eqref{E:smallball} and this observation there holds:
\begin{align*}
\sup_{\alpha\in\mathcal{T}}
\avg{\widetilde{u}}_{\widehat{K_\alpha}}^{d\v{b}}
\left(\avg{\sigma}_{\widehat{K_\alpha}}^{d\v{b}}\right)^{p-1}
\vol{b}\left(\widehat{K_\alpha}\right)^{\frac{-pa}{n+1+b}}
\lesssim \mathcal{A}^{2p},
\end{align*}
as desired. 

% \begin{remark}
% We note that the lower bound above only works for the case that
% $\sigma=u^{\frac{-p'}{p}}$ and so it does not work for the 
% two--weight setting. However, at least two things are true. 
% First, if $a=0$ as in the case of the Bergman projection, 
% then the above argument can be slightly modified to prove the 
% lower bound. Second, the lower bound is true for the operator 
% $S_{a,b}^{+}$ for all values of $a$. Indeed, it is easy to 
% see that for $z,w\in\widehat{K_\alpha}$ there holds 
% $\abs{1-\overline{z}w}\lesssim (1-\abs{\alpha}^2)$ and so the  
% lower bound \ref{E:lowerbdavg} is true when both $z$ and $w$ 
% \end{remark}

\subsection{Proof of Upper Bound in Theorem \ref{T:1wtsat} when $b+1>0$}
In this subsection we prove the upper bound in Theorem \ref{T:1wtsat} 
under the assumption that $b+1>0$. That is, we will show:
\begin{align*}
\norm{Q_{a,b}^{+}(\sigma \cdot):
  L_{b}^{p}(\sigma)\to L_{b}^{p}(\widetilde{u})}
\lesssim [u,\sigma]_{D_{p,a,b}}^{\max\left\{1,\frac{1}{p-1}\right\}}.
\end{align*}

We first handle the case $1<p\leq 2$; the other case will follow from
a duality argument. Fix a Bergman tree $\mathcal{T}$ and let $T=T_{\mathcal{T}}$
where $T_{\mathcal{T}}$ is the operator in \eqref{E:sparse}.
It will be enough to estimate $\norm{T(\sigma \cdot):L_{b}^{p}(\sigma)
\to L_{b}^{p}(\widetilde{u})}$. That is, we will show:
\begin{align*}
\norm{T(\sigma f)}_{L_{b}^{p}(\widetilde{u})}
\lesssim [u,\sigma]_{D_{p,a,b}}^{\frac{1}{p-1}}\norm{f}_{L_{b}^{p}(\sigma)}.
\end{align*}
It is more convenient to prove the equivalent inequality:
\begin{align*}
\norm{T(\sigma f)^{p-1}}_{L_{b}^{p'}(\widetilde{u})}
\lesssim [u,\sigma]_{D_{p,a,b}}\norm{f}_{L_{b}^{p}(\sigma)}^{p-1}.
\end{align*}
We will use duality and prove the following estimate, for 
all non--negative $f\in L_{b}^{p}(\sigma)$ and $g\in L_{b}^{p}(\widetilde{u})$:
\begin{align}\label{E:satDualized}
\ip{T(\sigma f)^{p-1}}{\widetilde{u}g}_{L_{b}^{2}}
\lesssim [u]_{D_{p,a,b}}\norm{f}_{L_{b}^{p}(\sigma)}^{p-1}
  \norm{g}_{L_{b}^{p}(\widetilde{u})}.
\end{align}

Before proving \eqref{E:satDualized}, we discuss some facts that 
will be used. First, using \eqref{E:volstuff} there holds:
\begin{align*}
\vol{b}(\widehat{K_\alpha})^{1+\frac{a}{n+1+b}}
\simeq\vol{b}(K_\alpha)^{1+\frac{a}{n+1+b}}
\simeq \left(1-\abs{\alpha}^2\right)^{n+1+a+b}
\simeq \int_{\widehat{K_\alpha}}\left(1-\abs{z}^2\right)^{a}
  d\v{b}(z).
\end{align*}
Therefore using the fact that $\sigma=u^{\frac{-p'}{p}}$ we have:
\begin{align*}
\vol{b}(\widehat{K_\alpha})^{1+\frac{a}{n+1+b}}
\simeq \int_{{K_\alpha}}
  \sigma(z)^{\frac{1}{p'}}u(z)^{\frac{1}{p}}
  \left(1-\abs{z}^2\right)^{a}d\v{b}(z)
\leq \left(\sigma_{b}({K_\alpha})\right)^{\frac{1}{p'}}
  \left(\widetilde{u}_{b}({K_\alpha})\right)^{\frac{1}{p}}.
\end{align*}
Recall also that $\avg{\sigma f}_{\widehat{K_\alpha}}^{d\v{b}}
=\avg{f}_{\widehat{K_\alpha}}^{\sigma d\v{b}}
\avg{\sigma}_{\widehat{K_\alpha}}^{d\v{b}}$. Finally, since 
$h(x)=x^{r}$ is subadditive for $0<r\leq 1$, using this applied with $r=p-1$, 
recall that $1<p\leq 2$, gives:
\begin{align*}
\left(T(\sigma f)\right(z))^{p-1}
\leq \sum_{\alpha\in\mathcal{T}}
  \vol{b}\left(\widehat{K_\alpha}\right)^{\frac{-a(p-1)}{n+1+b}}
  \left(\avg{\sigma f}_{\widehat{K_\alpha}}^{d\v{b}}\right)^{p-1}
  \unit_{\widehat{K_\alpha}}(z).
\end{align*}
Using these facts, we now prove \eqref{E:satDualized}.   Indeed, we have
\begin{align}
\ip{T(\sigma f)^{p-1}}{\widetilde{u}g}_{L_{b}^{2}}
&\leq\sum_{\alpha\in\mathcal{T}}
  %\vol{b}\left(\widehat{K_\alpha}\right)^{\frac{-a}{n+1+b}}
  \left(\avg{f}_{\widehat{K_\alpha}}^{\sigma d\v{b}}
  \avg{\sigma}_{\widehat{K_\alpha}}^{d\v{b}}
  \vol{b}(\widehat{K_\alpha})^{\frac{-a}{n+1+b}}\right)^{p-1}
  \int_{\widehat{K_\alpha}}g(z)\widetilde{u}(z)d\v{b}(z)
\\&\lesssim\sum_{\alpha\in\mathcal{T}}
  %\vol{b}\left(\widehat{K_\alpha}\right)^{\frac{-a}{n+1+b}}
  \left(\avg{f}_{\widehat{K_\alpha}}^{\sigma d\v{b}}\right)^{p-1}
  \avg{g}_{\widehat{K_\alpha}}^{\widetilde{u}d\v{b}}
  \frac{
  \left(\avg{\sigma}_{\widehat{K_\alpha}}^{d\v{b}}\right)^{p-1}
  \avg{\widetilde{u}}_{\widehat{K_\alpha}}^{d\v{b}}}
  {\vol{b}\left(\widehat{K_\alpha}\right)^{\frac{pa}{n+1+b}}}
  \vol{b}(\widehat{K_\alpha})^{1+\frac{a}{n+1+b}}
\\&\leq [u,\sigma]_{D_{p,a,b}}
  \sum_{\alpha\in\mathcal{T}}
  %\vol{b}\left(\widehat{K_\alpha}\right)^{\frac{-a}{n+1+b}}
  \left(\avg{f}_{\widehat{K_\alpha}}^{\sigma d\v{b}}\right)^{p-1}
  \left(\sigma_{b}({K_\alpha})\right)^{\frac{1}{p'}}
  \avg{g}_{\widehat{K_\alpha}}^{\widetilde{u}d\v{b}}
  \left(\widetilde{u}_{b}({K_\alpha})\right)^{\frac{1}{p}}
  \label{E:sabest}.
\end{align}
By H\"{o}lder's Inequality, the sum above is dominated by:
\begin{align*}
\left\{\sum_{\alpha\in\mathcal{T}}
  \left(\avg{f}_{\widehat{K_\alpha}}^{\sigma d\v{b}}\right)^{p}
  \sigma_{b}({K_\alpha})\right\}^{\frac{p-1}{p}}
\left\{\sum_{\alpha\in\mathcal{T}}
  \left(\avg{g}_{\widehat{K_\alpha}}^{\widetilde{u} d\v{b}}\right)^{p}
  \widetilde{u}_{b}({K_\alpha})\right\}^{\frac{1}{p}}.
\end{align*}
Using the disjointness of the sets $K_\alpha$ we estimate the first 
factor above using Lemma \ref{L:maxbdd}:
\begin{align*}
\sum_{\alpha\in\mathcal{T}}
  \left(\avg{f}_{\widehat{K_\alpha}}^{\sigma d\v{b}}\right)^{p}
  \sigma_{b}({K_\alpha})
\leq\int_{\Bn}\left(M_{\mathcal{T},\sigma d\v{b}}f(z)\right)^{p}
  \sigma(z) d\v{b}(z)
\leq \norm{f}_{L_{b}^{p}(\sigma)}^{p}.
\end{align*}
A similar estimate holds for the second factor, completing the proof in the case 
$1<p\leq 2$.

We now handle the case $2<p<\infty$. That is we want to show:  
\begin{align}\label{E:berDualizedDualc2}
\ip{T(\sigma f)}{\widetilde{u}g}_{L_{b}^{2}}
\lesssim [u,\sigma]_{D_{p,a,b}}\norm{f}_{L_{b}^{p}(\sigma)}
  \norm{g}_{L_{b}^{p'}(\widetilde{u})},
\end{align}
for all non--negative $f\in L_{b}^p(\sigma)$ and 
$g\in L_{b}^{p'}(\widetilde{u})$. Now, define:
\begin{align*}
\psi(z)
:={\sigma(z)}{\left(1-\abs{z}^2\right)^{-p'a}}
\hspace{.2in}
\textnormal{ and }
\hspace{.2in}
\widetilde{\psi}(z)
:=\psi(z)(1-\abs{z}^2)^{p'a}.
\end{align*}
Clearly, $\widetilde{\psi}=\sigma$. Set $\rho(z):=\psi(z)^{\frac{-p}{p'}}$.
There holds:
\begin{align*}
\widetilde{u}(z)
=\sigma(z)^{\frac{-p}{p'}}(1-\abs{z}^2)^{pa}
=\left(\sigma(z)(1-\abs{z}^2)^{-p'a}\right)^{\frac{-p}{p'}}
=\rho(z).
\end{align*}
Now, it is easy to see that:
\begin{align*}
[\psi,\rho]_{D_{p',a,b}}
&:=\sup_{\alpha\in\mathcal{T}}
  \left(\frac{\int_{\widehat{K_\alpha}}\rho d\v{b}}
  {\vol{b}(\widehat{K_\alpha})}\right)^{p'-1}
  \left(\frac{\int_{\widehat{K_\alpha}}\widetilde{\psi}d\v{b}}
  {\vol{b}(\widehat{K_\alpha})}\right)
  \vol{b}(\widehat{K_\alpha})^{\frac{-p'a}{n+1+b}}
\\&=\sup_{\alpha\in\mathcal{T}}
  \left(\frac{\int_{\widehat{K_\alpha}}\widetilde{u} d\v{b}}
  {\vol{b}(\widehat{K_\alpha})}\right)^{p'-1}
  \left(\frac{\int_{\widehat{K_\alpha}}\sigma d\v{b}}
  {\vol{b}(\widehat{K_\alpha})}\right)
  \vol{b}(\widehat{K_\alpha})^{\frac{-p'a}{n+1+b}}
\\&=[u,\sigma]_{D_{p,a,b}}^{p'-1}.
\end{align*}
Therefore, using the fact that $T$ is self--adjoint 
we have $\ip{T(\sigma f)}{\widetilde{u}g}_{L_{b}^{2}}
=\ip{T(\rho g)}{\widetilde{\psi}f}_{L_{b}^{2}}$. Using the 
fact that $p'<2$, yields: 
\begin{align*}
\ip{T(\rho g)}{\widetilde{\psi}f}_{L_{b}^{2}}
\leq [\psi,\rho]_{D_{p',a,b}}^{\frac{1}{p'-1}}
  \norm{g}_{L_{b}^{p'}(\rho)}
  \norm{f}_{L_{b}^{p}(\widetilde{\psi})}
=[u,\sigma]_{D_{p,a,b}}\norm{f}_{L_{b}^{p}(\sigma)}
  \norm{g}_{L_{b}^{p'}(\widetilde{u})}.
\end{align*}
This completes the proof of the upper bound in Theorem \ref{T:1wtsat}.

% Note that to prove the lower bound in Theorem \ref{T:1wtsat}, we 
% only use the fact that the operator satisfies a weak type inequality.
% We therefore have the following corollary:
% \begin{corollary}\label{C:maincor}
% For $1<p<\infty$ and $b>-1$ and a weight $u$ the following are 
% equivalent:
% \begin{itemize}
%  \item [(i)] The operator $S_{a,b}$ is bounded from 
%  $L_{b}^p(u)$ to $L_{b}^{p,\infty}(u)$;
%  \item [(ii)] $u\in B_p$;
%  \item [(iii)] The operator $S_{a,b}$ is bounded from 
%  $L_{b}^{p}(u)$ to itself. 
% \end{itemize}
% 
% \end{corollary}

\section{A Sharp Example}\label{sharp}
In this section, we give a weight, $u$ and a function $f$ such that:
\begin{align*}
\norm{Pf}_{L_{b}^2(u)}
\gtrsim [u]_{B_2}\norm{f}_{L_{b}^2(u)},
\end{align*}
which implies that the upper bound in Theorem \ref{T:1wtsat} is 
sharp. The idea is to reduce to the one--dimensional case and to 
use what is essentially the sharp example in \cite{PottReg2013}.

Let $u(z)=u(z_1)=\abs{1-z_{1}}^{(n+1+b)(1-\delta)}\abs{1+z_{1}}^{(n+1+b)(\delta-1)}$. 
We want to compute the $B_2$ characteristic of $u$. For $r_0 > 0$, abuse 
notation and let $r_0$ denote the vector $(r_0,0,\ldots,0)$ and similarly 
for $-r_0$. Let $z=(z_1,z')$, that is $z'=(z_2,\ldots,z_n)$. There holds:
\begin{align}\label{E:sharInt}
\int_{T_{r_0}}u(z)(1-\abs{z}^2)^bdV(z)
\end{align} 
is equal to
\begin{align}\label{E:sharInt1}
\int_{\{z_1:\abs{1-z_1}<1-r_0\}}
  \frac{\abs{1-z_1}^{(n+1+b)(1-\delta))}}{\abs{1+z_1}^{(n+1+b)(1-\delta))}}
  \int_{z':\abs{z'}^2<1-\abs{z_1}^2}
  (1-\abs{z}^2)^bdV_{n-1}(z')dA(z_1),
\end{align}
where above $dV_{n-1}$ is Lebesgue measure on $\mathbb{C}^{n-1}$ and $dA(z)$ is
Lebesgue measure on $\mathbb{C}$. For the inner integral, let 
$w=z'/\sqrt{1-\abs{z_1}^2}$. Using this change of variables, the inner integral becomes:
\begin{align*}
\int_{\B_{n-1}} (1-\abs{z_1}^2)^b(1-\abs{w}^2)^b
  \left(\sqrt{1-\abs{z_1}^2}\right)^{2(n-1)}
  dV_{n-1}(w)
\simeq \left(1-\abs{z_1}^2\right)^{n+b-1}.
\end{align*}
Inserting this into \eqref{E:sharInt1}, we see that \eqref{E:sharInt} is
comparable to:
\begin{align*}
\int_{\{z_1\in\D:\abs{1-z_1}<1-r_0\}}u(z_1)
  \left(1-\abs{z_1}^2\right)^{n+b-1}dA(z_1).
\end{align*}
To estimate this integral, it is easiest to make the conformal change of
variables $w=i\frac{1-z_1}{1+z_1}$ so that when $r_0$ is bounded away 
from $0$ (as is the case here) this integral is comparable to:
\begin{align*}
\int_{\{w\in \mathbb{H}: \abs{w}<R(r_0)\}}
  \abs{w}^{(n+1+b)(1-\delta)}(\Im w)^{n+b-1}dA(w)
\simeq \frac{R_0^{(n+1+b)(2-\delta)}}{(n+1+b)(2-\delta)},
\end{align*}
where above $R_0=R(r_0)$. 

Using similar reasoning, there holds:
\begin{align}
\int_{T_{r_0}}u^{-1}(z)(1-\abs{z}^2)^bdV(z)
&\simeq \int_{\{w\in \mathbb{H}: \abs{w}<R(r_0)\}}
  \abs{w}^{(n+1+b)(\delta-1)}(\Im w)^{n+b-1}dA(w)
\\&\simeq \frac{R_0^{(n+1+b)\delta}}{(n+1+b)\delta}.
\end{align} 
With $R=R(r_0)$ there holds $\v{b}(T_{r_0})\simeq R^{n+1+b}$. 
Thus, there holds $\avg{u}_{T_{r_0}}^{d\v{b}}\avg{u^{-1}}_{T_{r_0}}^{d\v{b}}
\simeq \delta^{-1}$. Similarly, $\avg{u}_{T_{r_0}}^{d\v{b}}
\avg{u^{-1}}_{T_{-r_0}}^{d\v{b}}
\simeq \delta^{-1}$. Now, the singularities of $u$ and $u^{-1}$ are at 
$(-1,0,\ldots,0)$ and $(1,0,\ldots,0)$ so the argument above implies that
if we take, say, $r_0>\frac{1}{2}$ (so that $R_0\lesssim 1$), then $u$ is a
$B_2$ weight with $[u]_{B_2}\simeq \delta^{-1}$. 

Now, let $f(w)=u^{-1}(w)\unit_{T_{1/2}}$. Then $\int_{\Bn}\abs{f(w)}^2ud\v{b}(w) = 
\int_{T_{1/2}}u^{-1}(w)d\v{b}(w)\simeq\delta^{-1}$. Now, we give a pointwise estimate 
of $P_{b}f(z)$. To do this, we may use the idea that we used to obtain the lower
bound in Theorem \ref{T:1wtsat} from \cite{Bek1981}. That is, for 
$z\in T_{-1/2}$ there holds $\abs{P_{b}f(z)} \geq \avg{f}_{T_{1/2}}
\simeq \delta^{-1}$.
Therefore, making the change of variables $w'=-w$ there holds:
\begin{align*}
\norm{Pf}_{L_{b}^2(u)}^2
&=\int_{\Bn}\abs{Pf(w)}^2ud\v{b}(w)
\\&\geq \delta^{-2}\int_{T_{-1/2}}u(w)d\v{b}(w)
\\&= \delta^{-2}\int_{T_{1/2}}u^{-1}(w')d\v{b}(w')
= [u]_{B_2}^{2}\norm{f}_{L_{b}^2(u)}^2.
\end{align*}

\section{Conclusion}\label{concl}
The subject of this paper has been one weight inequalities for operators 
acting on function spaces defined on $\Bn$. There are at least two 
additional directions in which one may continue this line of research. The 
first is proving results like the ones in this paper for more general domains. 
The second is proving two--weight inequalities. That is, if $T$ is one 
of the operators discussed in this paper, for which weights
$w,\sigma$ do we have $\norm{T:L_{b}^{p}(w)\to L_{b}^{p}(\sigma)}$ is 
finite?

\begin{bibsection}  
\begin{biblist}
\bib{AlePottReg2015}{article}{
    author={Aleman, Alexandru}
    author={Pott, Sandra},
    author={Reguera, Maria Carmen},
    title={Sarason Conjecture on the Bergman space},
    date={2013},
    eprint={http://arxiv.org/abs/1304.1750}
}

\bib{ArcRocSaw2006}{article}{
   author={Arcozzi, N.},
   author={Rochberg, R.},
   author={Sawyer, E.},
   title={Carleson measures and interpolating sequences for Besov spaces on
   complex balls},
   journal={Mem. Amer. Math. Soc.},
   volume={182},
   date={2006},
   number={859},
   pages={vi+163}
}

\bib{Bek1981}{article}{
   author={Bekoll{\'e}, David},
   title={In\'egalit\'e \`a poids pour le projecteur de Bergman dans la
   boule unit\'e de ${\bf C}^{n}$},
   language={French},
   journal={Studia Math.},
   volume={71},
   date={1981/82},
   number={3},
   pages={305--323}
}

\bib{BekBom1978}{article}{
   author={Bekoll{\'e}, David},
   author={Bonami, Aline},
   title={In\'egalit\'es \`a poids pour le noyau de Bergman},
   language={French, with English summary},
   journal={C. R. Acad. Sci. Paris S\'er. A-B},
   volume={286},
   date={1978},
   number={18},
   pages={A775--A778},
   issn={0151-0509},
   review={\MR{497663 (81e:42015)}}
}

\bib{Cru2015}{article}{
    author={Cruz-Uribe, David},
    title={Two weight norm inequalities for 
    fractional integral operators and commutators},
    date={2015},
    eprint={http://arxiv.org/abs/1412.4157}
}

\bib{CruMoe2013}{article}{
   author={Cruz-Uribe, David},
   author={Moen, Kabe},
   title={One and two weight norm inequalities for Riesz potentials},
   journal={Illinois J. Math.},
   volume={57},
   date={2013},
   number={1},
   pages={295--323}
}

\bib{FangWang2015}{article}{
   author={Fang, Xiang},
   author={Wang, Zipeng},
   title={Two weight inequalities for the Bergman projection with doubling
   measures},
   journal={Taiwanese J. Math.},
   volume={19},
   date={2015},
   number={3},
   pages={919--926}
}

\bib{Graf2014}{book}{
   author={Grafakos, Loukas},
   title={Classical Fourier analysis},
   series={Graduate Texts in Mathematics},
   volume={249},
   edition={3},
   publisher={Springer, New York},
   date={2014},
   pages={xviii+638},
   isbn={978-1-4939-1193-6},
   isbn={978-1-4939-1194-3}
}

\bib{HedKorZhu2000}{book}{
   author={Hedenmalm, Haakan},
   author={Korenblum, Boris},
   author={Zhu, Kehe},
   title={Theory of Bergman spaces},
   series={Graduate Texts in Mathematics},
   volume={199},
   publisher={Springer-Verlag, New York},
   date={2000},
   pages={x+286},
   isbn={0-387-98791-6},
   review={\MR{1758653 (2001c:46043)}},
   doi={10.1007/978-1-4612-0497-8},
}

\bib{Hyt2012}{article}{
   author={Hyt{\"o}nen, Tuomas P.},
   title={The sharp weighted bound for general Calder\'on-Zygmund operators},
   journal={Ann. of Math. (2)},
   volume={175},
   date={2012},
   number={3},
   pages={1473--1506}
}

\bib{HytKai2012}{article}{
   author={Hyt{\"o}nen, Tuomas},
   author={Kairema, Anna},
   title={Systems of dyadic cubes in a doubling metric space},
   journal={Colloq. Math.},
   volume={126},
   date={2012},
   number={1},
   pages={1--33}
}

\bib{Hyt2014}{article}{
    author={Hyt\"{o}nen, Tuomas P.},
    title={The two-weight inequality for the 
    Hilbert transform with general measures},
    date={2014},
    eprint={http://arxiv.org/abs/1312.0843}
}

\bib{HytEtAl2012}{article}{
   author={Hyt{\"o}nen, Tuomas P.},
   author={Lacey, Michael T.},
   author={Martikainen, Henri},
   author={Orponen, Tuomas},
   author={Reguera, Maria Carmen},
   author={Sawyer, Eric T.},
   author={Uriarte-Tuero, Ignacio},
   title={Weak and strong type estimates for maximal truncations of
   Calder\'on-Zygmund operators on $A_p$ weighted spaces},
   journal={J. Anal. Math.},
   volume={118},
   date={2012},
   number={1},
   pages={177--220},
   issn={0021-7670},
   review={\MR{2993026}},
   doi={10.1007/s11854-012-0033-3},
}

\bib{Lac2015}{article}{
    author={Lacey, Michael},
    date={2015},
    eprint={http://arxiv.org/abs/1501.05818}
}

\bib{Lac2014}{article}{
   author={Lacey, Michael T.},
   title={Two-weight inequality for the Hilbert transform: a real variable
   characterization, II},
   journal={Duke Math. J.},
   volume={163},
   date={2014},
   number={15},
   pages={2821--2840}
}

\bib{LacSawSheUri2014}{article}{
   author={Lacey, Michael T.},
   author={Sawyer, Eric T.},
   author={Shen, Chun-Yen},
   author={Uriarte-Tuero, Ignacio},
   title={Two-weight inequality for the Hilbert transform: a real variable
   characterization, I},
   journal={Duke Math. J.},
   volume={163},
   date={2014},
   number={15},
   pages={2795--2820}
}

\bib{LacSawShuUriWick2014}{article}{
   author={Lacey, Michael T.},
   author={Sawyer, Eric T.},
   author={Shen, Chun-Yen},
   author={Uriarte-Tuero, Ignacio},
   author={Wick, Brett D.}
   title={Two Weight Inequalities for the Cauchy Transform from
   $\mathbb{R}$ to $\mathbb{C}_{+}$},
   date={2014},
   eprint={http://arxiv.org/abs/1310.4820}
}

\bib{LacSpe2015}{article}{
   author={Lacey, Michael T.},
   author={Spencer, Scott},
   title={On entropy bumps for Calder\'on-Zygmund operators},
   journal={Concr. Oper.},
   volume={2},
   date={2015},
   pages={47--52}
}

\bib{LacWick2014}{article}{
   author={Lacey, Michael T.},
   author={Wick, Brett D.}
   title={Two Weight Inequalities for Riesz Transforms: 
   Uniformly Full Dimension Weights},
   date={2014},
   eprint={http://arxiv.org/abs/1312.6163}
}

\bib{Ler2013}{article}{
   author={Lerner, Andrei K.},
   title={A simple proof of the $A_2$ conjecture},
   journal={Int. Math. Res. Not. IMRN},
   date={2013},
   number={14},
   pages={3159--3170}
}

\bib{LerNaz2015}{article}{
    author={Lerner, Andrei K.},
    aurhor={Nazarov, Fedor},
    title={Intuitive dyadic calculus: the basics},
    date={2015},
    eprint={http://arxiv.org/abs/1508.05639}
}

\bib{Neu1983}{article}{
   author={Neugebauer, C. J.},
   title={Inserting $A_{p}$-weights},
   journal={Proc. Amer. Math. Soc.},
   volume={87},
   date={1983},
   number={4},
   pages={644--648}
}

\bib{Per1994}{article}{
   author={P{\'e}rez, Carlos},
   title={Two weighted inequalities for potential and fractional type
   maximal operators},
   journal={Indiana Univ. Math. J.},
   volume={43},
   date={1994},
   number={2},
   pages={663--683}
}

\bib{Per1995}{article}{
   author={P{\'e}rez, Carlos},
   title={On sufficient conditions for the boundedness of the
   Hardy-Littlewood maximal operator between weighted $L^p$-spaces with
   different weights},
   journal={Proc. London Math. Soc. (3)},
   volume={71},
   date={1995},
   number={1},
   pages={135--157}
}

\bib{Pet2007}{article}{
   author={Petermichl, S.},
   title={The sharp bound for the Hilbert transform on weighted Lebesgue
   spaces in terms of the classical $A_p$ characteristic},
   journal={Amer. J. Math.},
   volume={129},
   date={2007},
   number={5},
   pages={1355--1375}
}

\bib{Pet2008}{article}{
   author={Petermichl, Stefanie},
   title={The sharp weighted bound for the Riesz transforms},
   journal={Proc. Amer. Math. Soc.},
   volume={136},
   date={2008},
   number={4},
   pages={1237--1249}
}

\bib{PottReg2013}{article}{
   author={Pott, Sandra},
   author={Reguera, Maria Carmen},
   title={Sharp B\'ekoll\'e estimates for the Bergman projection},
   journal={J. Funct. Anal.},
   volume={265},
   date={2013},
   number={12},
   pages={3233--3244}
}

\bib{Saw1998}{article}{
   author={Sawyer, Eric T.},
   title={A characterization of two weight norm inequalities for fractional
   and Poisson integrals},
   journal={Trans. Amer. Math. Soc.},
   volume={308},
   date={1988},
   number={2},
   pages={533--545},
   issn={0002-9947}
}

\bib{Tre2012}{article}{
   author={Treil, Sergei},
   title={A remark on two weight estimates for positive dyadic operators},
   date={2012},
   eprint={http://arxiv.org/abs/1201.1455}
}

\bib{TreVol2015}{article}{
   author={Treil, Sergei},
   author={Volberg, Alexander}
   title={Entropy conditions in two weight inequalities 
   for singular integral operators},
   date={2015},
   eprint={http://arxiv.org/abs/1408.0385}
}

\bib{Zhu2005}{book}{
   author={Zhu, Kehe},
   title={Spaces of holomorphic functions in the unit ball},
   series={Graduate Texts in Mathematics},
   volume={226},
   publisher={Springer-Verlag, New York},
   date={2005},
   pages={x+271}
}
\end{biblist}
\end{bibsection}

\end{document}